\newtheorem{definition}{Definition}
\newtheorem{thm}{Theorem}
\newtheorem{lemma}[thm]{Lemma}
\newtheorem{claim}{Claim}
\definecolor{nicelavender}{RGB}{153, 128, 250}
\definecolor{nicegreen}{RGB}{106, 176, 76}
\definecolor{random}{RGB}{60, 170, 200}
\definecolor{nicepurple}{RGB}{200, 90, 160}
\definecolor{darkpurple}{RGB}{164, 10, 106}
\definecolor{darkblue}{RGB}{0, 0, 100}
\newcommand\cece[1]{{\color{nicegreen}{#1}}}
\begin{document}

\author{Cicely Henderson$^\dagger$}
\author{Evelyne Smith-Roberge$^\flat$}
\author{Sophie Spirkl\thanks{We acknowledge the support of the Natural Sciences and Engineering Research Council of Canada (NSERC), [funding reference number RGPIN-2020-03912]. Cette recherche a \'et\'e financ\'ee par le Conseil de recherches en sciences naturelles et eng\'enie du Canada (CRSNG), [num\'ero de r\'ef\'erence RGPIN-2020-03912]. This project was funded in part by the Government of Ontario. This research was completed while Spirkl was an Alfred P. Sloan Fellow.}}
\author{Rebecca Whitman$^\sharp$}
\affil{$^{\dagger, *}$Dept.~of Combinatorics and Optimization, University of Waterloo \\ \texttt{\{c3hender, sspirkl\}@uwaterloo.ca}}
\affil{$^\flat$Dept. of Mathematics, Illinois State University \\ \texttt{esmithr@ilstu.edu}}
\affil{$^\sharp$Dept. of Mathematics,
University of California, Berkeley \\ \texttt{rebecca\_whitman@berkeley.edu}}
\title{Maximum $k$-colourable induced subgraphs in $(P_5+rK_1)$-free graphs}
\date{\today}
\maketitle

\begin{abstract}
We show that for any nonnegative integer $r$, the \textsc{Weighted Maximum List-$k$-Colourable Induced Subgraph} problem can be solved in polynomial time for input graphs that do not contain $(P_5+ rK_1)$ as an induced subgraph, and give an explicit algorithm demonstrating this. This answers a question of Agrawal et al.\ (2024). 
\end{abstract}

\section{Introduction}

Let $G$ be a graph and $\omega : V(G) \rightarrow \mathbb{R}_{+}$ be a weight function. For a subgraph $H$ of $G$, we define $\omega(H):= \sum_{v \in V(H)} \omega(v)$. We denote by $v(G)$ the number of vertices in $G$. Given a positive natural number $k$, we use $[k]$ to denote the set $\{1, 2, \dots, k\}$, and $P_k$ to denote the path with $k$ vertices. We use $G + H$ to denote the disjoint union of graphs $G$ and $H$, and $kG$ to denote the disjoint union of $k$ isomorphic copies of $G$. Given graphs $G$ and $H$, we say $G$ is \emph{$H$-free} if $G$ does not contain an induced subgraph isomorphic to $H$. For $X, Y \subseteq V(G)$, we say that $X$ is \emph{anticomplete} to $Y$ if $X \cap Y = \emptyset$ and $G$ contains no edge with one end in $X$ and the other in $Y$. 

Given a graph $G$, a \emph{list-$k$-assignment} of $G$ is a function $L: V(G) \rightarrow 2^{[k]}$ that associates to each vertex $v \in V(G)$ a list $L(v) \subseteq [k]$ of colours. Given a list assignment $L$, an $L$-colouring of $G$ is a proper colouring $\phi$ of $G$ such that $\phi(v) \in L(v)$ for all $v \in V(G)$. 

The \textsc{Weighted Maximum List-$k$-Colourable Induced Subgraph} problem (henceforth abbreviated as WML$k$CIS) is the following: 
\begin{itemize}
    \item Input: A graph $G$, list-$k$-assignment $L$ of $G$ and weight function $\omega: V(G) \rightarrow \mathbb{R}_{+}$. 
    \item Output: An $L$-colourable induced subgraph $H$ of $G$ that maximizes $\omega(H)$ over all such subgraphs of $G$. 
\end{itemize}
This generalizes the \textsc{Weighted Maximum $k$-Colourable Induced Subgraph} problem, in which we require $L(v) = [k]$ for all $v \in V(G)$. WML$k$CIS also generalizes the \textsc{List-$k$-Colouring} problem, which asks if $G$ admits an $L$-colouring.

The WML$k$CIS problem is difficult: in particular, it is NP-hard for every fixed $k$, and even NP-hard to approximate within a factor of $v(G)^\epsilon$ for some fixed $\epsilon > 0$ (per Lund and Yannakakis \cite{lund1993approximation}). It is natural, therefore, to restrict our attention to specific graph classes and try to solve the problem there. Another possible approach is to restrict our study to fixed, small values of $k$.

The $k=1$ and $k=2$ cases are particularly well-studied. When $k=1$, the problem is equivalent to the NP-hard problem of finding a maximum weight independent set (MWIS) in the induced subgraph of the input graph consisting of vertices with a non-empty list. When $k=2$ and all vertices have list $\{1, 2\}$, the problem is equivalent to the  \textsc{Odd Cycle Transversal} (OCT) problem, that is, the problem of finding a set of vertices whose deletion results in a maximum-weight graph with no odd cycles. 

Since complexity results for WML$k$CIS often seem to align with those for the OCT problem, we review what is known for this problem in $H$-free graphs. It follows from Chiarelli et al.\ \cite{chiarelli2018} that if a component of $H$ contains a cycle or a claw, then the OCT problem is NP-hard for $H$-free graphs. If no component of $H$ contains a cycle or claw, then $H$ is a \emph{linear forest}, that is, each component is a path. For $P_4$-free graphs, the OCT problem is solvable in polynomial time (per Brandstädt and Kratsch \cite{brandstadt1985}). For every integer $r \geq 1$, it is also solvable in polynomial time for $rP_2$-free graphs (as shown by Chiarelli et al.\ \cite{chiarelli2018}), and $(rK_1+P_3)$-free graphs (as shown by Dabrowski et al.\ \cite{dabrowski2020}). It was shown by Okrasa and Rz\k{a}żewski \cite{okrasa2020} to be NP-hard for $P_{13}$-free graphs, and per \cite{dabrowski2020}, it is also NP-hard for $(P_2+P_5,P_6)$-free graphs. Consequently, the problem is NP-hard for $P_t$-free graphs with $t \ge 6$. Until very recently, the only path $P$ for which the complexity of the OCT problem for $P$-free graphs was unknown was $P_5$; this final case was resolved by Agrawal, Lima, Lokshtanov, Rz\k{a}żewski, Saurabh, and Sharma  \cite{agrawal2024odd}, who gave a polynomial-time algorithm for the problem. This answered a question asked by Rz\k{a}żewski (as reported in \cite{chudnovsky2019}), and reiterated by Chudnovsky et al.\ \cite{chudnovsky2021}.

In \cite{agrawal2024odd}, the authors ask whether the WML$k$CIS problem can be solved for $P_5$-free graphs for general $k$ in the special case when $L(v) = [k]$ for every vertex $v$ of the input graph. In this paper, we answer their question in the affirmative; in fact, we show the following stronger statement (without the restriction on lists). 

 \begin{thm}\label{thm:mainthmp5}
    Let $k$ and $r$ be fixed nonnegative integers. The WML$k$CIS problem can be solved in polynomial time for all $(P_5+rK_1)$-free graphs, list-$k$-assignments, and weight functions.
\end{thm}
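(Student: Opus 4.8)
The plan is to build a recursive algorithm, and for this it is convenient to solve a mild strengthening of WM$k$CIS, which I will call the \emph{annotated} problem: each vertex $v$ carries a list $L(v)\subseteq[k]$ of admissible colours, a set $F$ of vertices is \emph{forced} into the solution with prescribed, mutually compatible colours, and we seek a maximum-weight induced subgraph $H\supseteq F$ admitting a proper colouring that uses only admissible colours and extends the given colouring of $F$. This class of instances is preserved by deleting vertices, by forcing vertices in or out, and by shrinking lists; it specialises to WM$k$CIS when $F=\emptyset$ and every list is $[k]$; and since $(P_5+rK_1)$-freeness is hereditary, it suffices to solve the annotated problem on $(P_5+rK_1)$-free graphs. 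Two reductions dispose of the simplest cases. First, a disconnected graph is solved componentwise, since the colour classes of distinct components do not interact. Second, if the independence number is at most $t$, then any feasible $H$ has at most $kt$ vertices, so it suffices to iterate over all $O(n^{kt})$ candidate vertex sets and all $O(k^{kt})$ colourings.

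To remove the $rK_1$ I would reduce to the $P_5$-free case. First a Ramsey-type bound: a $(P_5+rK_1)$-free graph contains at most $\lceil r/3\rceil$ pairwise anticomplete induced copies of $P_5$, because from $j$ such copies, taking the first, third, and fifth vertex of all but one of them, one obtains an induced $P_5+3(j-1)K_1$. Fix a maximal family of pairwise anticomplete induced $P_5$'s, let $\mathcal Q$ be the union of their vertex sets (so $|\mathcal Q|=O_r(1)$), let $A$ be the set of vertices anticomplete to $\mathcal Q$, and let $B=V(G)\setminus(\mathcal Q\cup A)$. By maximality $G[A]$ is $P_5$-free, and since $A$ is anticomplete to an induced $P_5$ we get $\alpha(G[A])\le r-1$; consequently every feasible $H$ meets $A$ in at most $k(r-1)$ vertices and meets $\mathcal Q$ in at most $|\mathcal Q|$ vertices. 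Branching over the $n^{O_{k,r}(1)}$ choices of $V(H)\cap(\mathcal Q\cup A)$ and a proper colouring of that set fixes a partial colouring---hence updated lists---on $B$, and leaves only the vertices of $B$ undetermined. Recursing inside $B$ the same way strictly shrinks the undetermined part, and once it is $P_5$-free we are left with an annotated instance on a $P_5$-free graph. The subtle point is the running time: carried out naively this branches to depth linear in $|V(G)|$, so the recursion must instead be memoised over a polynomial family of annotated subproblems, in the spirit of the existing polynomial-time algorithms for maximum weight independent set and for odd cycle transversal on $P_5$-free graphs.

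For the $P_5$-free core---the part that answers the question of Agrawal et al.\ \cite{agrawal2024odd}---I would recurse on the graph using the classical theorem of Bacs\'o and Tuza that every connected $P_5$-free graph has a dominating clique or a dominating induced $P_3$, bottoming out at graphs of bounded size. The key new leverage for general $k$ is the elementary observation that a clique meets a $k$-colourable induced subgraph in at most $k$ vertices: given a dominating clique $D$, there are only $n^{O(k)}$ choices for $V(H)\cap D$, and for each one can additionally guess its (necessarily injective) colouring, fix the resulting lists on $N(D)$, and recurse on a strictly smaller instance; the dominating-$P_3$ case is handled similarly, the point being that even though one cannot enumerate the neighbourhood of the $P_3$, guessing the at most three solution vertices lying on it, their colours, and working with the separators it induces suffices. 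This can alternatively be organised as an induction on $k$, reducing an annotated $k$-colouring instance to polynomially many annotated $(k-1)$-colouring ones, the base case being maximum weight independent set, which is polynomial on $P_5$-free graphs by Lokshtanov, Vatshelle, and Villanger. The main obstacle I anticipate lies exactly here: threading a partial $k$-colouring through the minimal-separator / potential-maximal-clique machinery behind the $P_5$-free independent-set and odd-cycle-transversal algorithms while keeping the number of generated subproblems polynomial, and dovetailing this with the $rK_1$ layer above. By comparison, the remaining ingredients---verifying feasibility of precolourings, and the bookkeeping for combining partial solutions across separators and components---should be routine.
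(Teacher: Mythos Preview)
Your proposal is an outline rather than a proof, and you yourself flag the two places where it is incomplete: the memoisation that would rescue the $rK_1$-peeling from linear recursion depth, and the mechanism that turns the Bacs\'o--Tuza dominating set into polynomially many $(k-1)$-colouring subproblems. Both gaps are genuine. For the first, ``memoise over a polynomial family of annotated subproblems'' is a hope, not an argument: the annotated instances you generate differ in their list assignments, and lists on $B$ depend on which coloured subset of $\mathcal Q\cup A$ you guessed, so there is no evident polynomial bound on the number of distinct subproblems. For the second, after guessing $V(H)\cap D$ and its colouring for a dominating clique $D$, the residual instance on $V(G)\setminus D$ is not a single $(k-1)$-colour problem; different vertices lose different colours, and the pieces obtained after deleting $D$ must be recombined consistently. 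You gesture at the potential-maximal-clique machinery, but that machinery, as developed for MWIS and OCT, does not obviously carry list constraints of this generality.

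The paper takes a route that sidesteps both issues simultaneously and never separates the $rK_1$ layer from the $P_5$ layer. The central structural fact is that every connected $k$-colourable induced subgraph of a $(P_5+rK_1)$-free graph has a connected dominating set $S$ of size at most $(k+2)(r-1)+5$ (Lemma~\ref{lem:smallconnecdomset}). One then guesses, for each component $C$ of an optimum, such an $S$ and an $L$-colouring of it; the rest of $C$ lies in $N(S)$, which is partitioned into sets $X_1,\dots,X_t$ according to the order on $S$, and on each $X_i$ the colour $f(v_i)$ is forbidden, so the restriction of $C$ to $X_i$ is a $(k-1)$-colouring instance solvable by induction. The hard part---ensuring that the independently-computed pieces on different $X_i$'s are mutually compatible and that the resulting $C$ is anticomplete to the rest of the optimum---is handled by augmenting the guess with a constant number of additional ``witness'' vertices (the sets $A_c$, $B_c$, $Y_{i,c,\ell}$, $Z_{i,c}$ in the canvas of Definition~\ref{def:canvas}), whose sizes are bounded via short $P_5+rK_1$ arguments (Claims~\ref{claim:aissmall} and~\ref{claim:Y'deffocanvas}). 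This yields polynomially many candidate components, and the optimum is then recovered as a maximum-weight independent set in the blob graph $H(G,\mathcal C)$, which is again $(P_5+rK_1)$-free (Lemma~\ref{lem:p5freenice}). In short: the paper's induction on $k$ is organised around components of the optimum rather than around a decomposition of $G$, and the delicate cross-$X_i$ consistency is exactly what your sketch does not supply.
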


Couturier et al.\ \cite{couturier2015list} showed that \textsc{List-$k$-Colouring} is NP-complete in $P_4+P_2$-free graphs for all $k \geq 5$. Along with the aforementioned NP-hardness result for OCT in $P_6$-free graphs \cite{dabrowski2020}, this leaves only two possible cases when WML$k$CIS might be polynomial-time solvable: when $H$ is an induced subgraph of $rP_3$ (which is shown to be polynomial-time solvable in the recent work of Galby et al. \cite{galby2025}) and when $H$ is an induced subgraph of $P_5+rK_1$, our main result above. This completes the complexity dichotomy of WML$k$CIS for $k \ge 5$.

\begin{thm}[\cite{galby2025}, Theorem 4] \label{thm:complexitydichotomy} 
    For $k \ge 5$, the WML$k$CIS problem can be solved in polynomial time for all $H$-free graphs if and only if $H$ is an induced subgraph of $rP_3$ or $P_5 + rK_1$, for some $r \ge 1$. 
\end{thm}

We note that if one can solve the WML$k$CIS problem for a graph $G$ efficiently for every fixed $k$ and list-$k$-assignment $L$, then one can also determine: 
    \vspace{-4mm}
    \begin{itemize}\itemsep -2pt
        \item the maximum size of an independent set in $G$ (in our case, we use the algorithm of Lokshtanov et al.\ \cite{lokshtanov2014independent}, and so our results do not lead to a new algorithm for MWIS in $P_5$-free graphs); 
        \item as mentioned prior, the minimum-weight odd cycle transversal, and so Theorem \ref{thm:mainthmp5} generalizes the afore-mentioned result of Agrawal et al.\ \cite{agrawal2024odd}; and 
        \item the \textsc{List-$k$-Colouring} problem for $G$, and so Theorem \ref{thm:mainthmp5} generalizes a result of Couturier et al.\ \cite{couturier2015list} (which in turn generalized Ho\`ang et al.\ \cite{hoang2010deciding}). 
    \end{itemize}
    \vspace{-4mm}

Efficient algorithms for any of these three problems do not, however, guarantee an efficient algorithm for the WML$k$CIS problem.

In Section \ref{sec:prefatory_results} we prove several necessary lemmas about $(P_5 +rK_1)$-free graphs. Section \ref{sec:p5rk1} contains the proof of Theorem \ref{thm:mainthmp5}. 





\section{Prefatory Results}
\label{sec:prefatory_results}

We provide several prefatory results on $(P_5 + rK_1)$-free graphs. For a fixed  nonnegative integer $r$, we denote by $\mathcal{G}_r$ the class of $(P_5 + rK_1)$-free graphs. For a given graph $G$, vertex $v \in V(G)$, and vertex subset $S \subseteq V(G)$, let $N_G(v)$ denote the neighbour set of $v$ in $G$ and let $N_G(S) = \bigcup_{v \in S} N_G(v)  \setminus S$. Let $N_G\langle S\rangle = S \cup \bigcup_{v \in S} N_G(v)$ denote the closed neighbourhood of a set $S$. Where the choice of $G$ is clear, we omit the subscript. Let $G[S]$ denote the induced subgraph of $G$ with vertex set $S$. We define an \emph{ordered set} to be a set together with a fixed ordering of the elements. 

The following is an easy consequence of the polynomial-time algorithm for solving the MWIS in $P_5$-free graphs, due to Lokshtanov, Vatshelle, and Villanger \cite{lokshtanov2014independent}. 

\begin{lemma} \label{lem:stable}
    Fix $r \in \mathbb{Z}_{\ge 0}$. There is a polynomial-time algorithm for solving the MWIS problem on $(P_5 + rK_1)$-free graphs.
\end{lemma}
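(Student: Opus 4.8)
My plan is to reduce the maximum-weight independent set (MWIS) problem on a $(P_5+rK_1)$-free graph $G$ to polynomially many instances of MWIS on $P_5$-free graphs, which are solvable in polynomial time by the algorithm of Lokshtanov, Vatshelle, and Villanger \cite{lokshtanov2014independent}. For a vertex set $W$ write $N_G[W] := W \cup N_G(W)$. The crucial observation is that if $W \subseteq V(G)$ is an independent set with $|W| = r$, then $G - N_G[W]$ is $P_5$-free: every vertex of $G - N_G[W]$ is anticomplete to $W$, so an induced $P_5$ in $G - N_G[W]$ would form, together with $W$, an induced $P_5 + rK_1$ in $G$ --- impossible since $G \in \mathcal{G}_r$.

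Fix a weight function $\omega$ and let $I^*$ denote a maximum-weight independent set of $G$. The algorithm builds a list of candidate independent sets and returns one of maximum weight. First, it adds to the list every independent set $W \subseteq V(G)$ with $|W| \le r - 1$; there are $O(v(G)^{r-1})$ such sets, so this step is polynomial, and it captures $I^*$ whenever $|I^*| \le r-1$ (in particular whenever $\alpha(G) < r$). Second, for every independent set $W \subseteq V(G)$ with $|W| = r$ --- there are $O(v(G)^{r})$ of these --- it forms the $P_5$-free graph $G_W := G - N_G[W]$, computes a maximum-weight independent set $J_W$ of $G_W$ using \cite{lokshtanov2014independent}, and adds $W \cup J_W$ to the list.

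For correctness I would argue as follows. Every candidate is an independent set of $G$: in a candidate $W \cup J_W$, both $W$ and $J_W$ are independent and $J_W$ is anticomplete to $W$ because $J_W \subseteq V(G) \setminus N_G[W]$. Conversely, the returned set has weight at least $\omega(I^*)$: if $|I^*| \le r-1$ this is clear from the first step, and if $|I^*| \ge r$, pick any $W \subseteq I^*$ with $|W| = r$. Since $I^*$ is independent, $N_G(W) \cap I^* = \emptyset$, so $I^* \setminus W \subseteq V(G)\setminus N_G[W] = V(G_W)$, and $I^* \setminus W$ is an independent set of $G_W$. Hence $\omega(J_W) \ge \omega(I^* \setminus W)$ and $\omega(W \cup J_W) = \omega(W) + \omega(J_W) \ge \omega(W) + \omega(I^*\setminus W) = \omega(I^*)$, so the list contains a maximum-weight independent set. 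The running time is dominated by $O(v(G)^r)$ calls to the $P_5$-free MWIS algorithm of \cite{lokshtanov2014independent}, hence polynomial for fixed $r$.

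I do not expect a serious obstacle here: the only step requiring care is verifying that $G - N_G[W]$ is $P_5$-free, which is immediate from the definition of $(P_5+rK_1)$-freeness once one notes that deleting $N_G[W]$ leaves every remaining vertex anticomplete to $W$. The remaining bookkeeping --- enumerating independent sets of bounded size and combining weights --- is routine.
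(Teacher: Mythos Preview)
Your proposal is correct and is essentially the same argument as the paper's own proof: both enumerate small independent sets directly, and for each independent set $W$ of size $r$ observe that $G - N_G[W]$ is $P_5$-free, apply the Lokshtanov--Vatshelle--Villanger algorithm there, and take $W$ together with the resulting MWIS as a candidate. The paper uses the notation $T, T^+$ and $G \setminus (T \cup N(T))$ in place of your $W, J_W$ and $G - N_G[W]$, but the logic is identical.
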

\begin{proof}
    If $r = 0$, we use the algorithm given by Lokshtanov, Vatshelle, and Villanger \cite{lokshtanov2014independent}.

    Otherwise, let $r \ge 1$. Let $G \in \mathcal{G}_r$ be a graph, and let $\omega: V(G) \rightarrow \mathbb{R}_{+}$ be a weight function. For $i \in \{1, \dots, r\}$, let $\mathcal{S}_i$ denote the set of all independent sets of size $i$ in $G$. For every $T \in \mathcal{S}_r$, let $T^+$ denote a maximum-weight independent set in $G \setminus N\langle T \rangle$. Let $\mathcal{S} = \{T \cup T^+ : T \in \mathcal{S}_r\} \cup \mathcal{S}_1 \cup \dots \cup \mathcal{S}_r$. 

    We claim that: 
    \vspace{-4mm}
    \begin{itemize}\itemsep -2pt
        \item $\mathcal{S}$ can be computed in polynomial time. 
        \item Every $S \in \mathcal{S}$ is an independent set in $G$. 
        \item $\mathcal{S}$ contains a maximum-weight independent set of $G$. 
    \end{itemize}
    \vspace{-4mm}
    First, note that for each $i \in \{1, \dots, r\}$, we have that $|\mathcal{S}_{i}| \leq \binom{v(G)}{{i}} \leq v(G)^{i}$. The first and second statements  immediately follow for the sets $\mathcal{S}_1, \dots, \mathcal{S}_r$.  For $T \in \mathcal{S}_r$, note that $G \setminus N\langle T \rangle$ is $P_5$-free (otherwise, adding $T$ to a copy of $P_5$ in  $G \setminus N\langle T \rangle$ gives a copy of $P_5+rK_1$ in $G$, a contradiction). Therefore for each $T \in \mathcal{S}_r$, we have that $T^+$ can be computed in polynomial time using the algorithm of \cite{lokshtanov2014independent}. Recalling again that $|\mathcal{S}_r| \leq v(G)^r$, it follows that the first statement holds for the remaining set $\{T \cup T^+ : T \in \mathcal{S}_r\}$ of $\mathcal{S}$. Since for each $T \in \mathcal{S}_r$ we have that $T^+$ and $T$ are each independent sets and $T^+$ contains no neighbours of $T$ by definition, it follows that $T \cup T^+$ is independent, and so the second statement holds. 

    It remains to prove the third statement. Let $S$ be a maximum-weight independent set in $G$. If $|S| \leq r$, then $S \in \mathcal{S}_{|S|}$, and hence $S \in \mathcal{S}$, as desired. Therefore, we may assume that $|S| > r$. Let $T \subseteq S$ with $|T| = r$. Then $T \in \mathcal{S}_r$, and moreover, $S \setminus T$ is an independent set in $G \setminus N\langle T \rangle$. It follows that $\omega(T^+) \geq \omega(S \setminus T)$, and so $\omega(T \cup T^+) \geq \omega(T) + \omega(S \setminus T) = \omega(S)$. Therefore, $\mathcal{S}$ contains an independent set of maximum weight, which establishes the third statement. 
    
    Given the three statements above, the following is an algorithm to generate a maximum-weight independent set of $G$ in polynomial time: First, we compute $\mathcal{S}$ (which can be done in polynomial time by the first statement). Next, we choose an element of $\mathcal{S}$ of maximum weight (which can be done concurrently with computing $\mathcal{S}$, by keeping track of the highest-weight element of $\mathcal{S}$ generated so far).
\end{proof}

Given a graph $G$ and an edge $uv \in E(G)$, we define $G/uv$ to be the graph obtained from $G$ by removing $u$ and $v$, and adding a new vertex $w$ with $N_{G/uv}(w) = (N(u) \cup N(v)) \setminus \{u, v\}$. The operation transforming $G$ into $G/uv$ is called \emph{edge contraction} (together with \emph{simplification,} if $u$ and $v$ have a common neighbour in $G$). Similar to \cite{gartland2021finding}, we show in Lemma \ref{lemma:P5+rk1freegraphsnice} that edge contraction preserves being $(P_5 + rK_1)$-free; in fact, we present a more general argument showing that for every linear forest $H$, edge contraction preserves being $H$-free.

\begin{lemma}
\label{lemma:P5+rk1freegraphsnice}
    Let $H$ be a linear forest. The class of $H$-free graphs is closed under edge contraction.  
\end{lemma}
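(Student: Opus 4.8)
The plan is to argue by contradiction, using only the local structure of a linear forest: every vertex has degree at most $2$, and the forest is triangle-free. Let $G$ be $H$-free and let $uv \in E(G)$; write $G' = G/uv$ and let $w$ be the new vertex, so that $N_{G'}(w) = (N_G(u) \cup N_G(v)) \setminus \{u,v\}$ and $G' - w$ is literally the induced subgraph $G - \{u,v\}$ of $G$. Suppose for contradiction that $G'$ contains an induced copy of $H$ on a vertex set $S$. If $w \notin S$, then $G[S] = (G - \{u,v\})[S] = G'[S] \cong H$, contradicting that $G$ is $H$-free; so $w \in S$. Since $H$ is a linear forest it has maximum degree at most $2$ and no triangle, so $D := N_{G'}(w) \cap S$ is an independent set with $|D| \le 2$. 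Set $S' := S \setminus \{w\} \subseteq V(G) \setminus \{u,v\}$; note $G[S'] = G'[S']$. The two facts that drive the argument are: (i) every vertex of $S' \setminus D$ is non-adjacent in $G$ to both $u$ and $v$, being a non-neighbour of $w$ in $G'$; and (ii) every vertex of $D$ is adjacent in $G$ to at least one of $u, v$.

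First I would dispose of the case $|D| \le 1$. If $D = \{a\}$, relabel so that $a \in N_G(u)$, which is possible by (ii); if $D = \emptyset$, let $u$ be either end of the contracted edge. By (i), $u$ has no neighbour in $S' \setminus D$, hence $N_G(u) \cap S' = D$. Then the identity on $S'$ together with $u \mapsto w$ is an isomorphism $G[S' \cup \{u\}] \to G'[S]$, so $G$ contains an induced $H$, a contradiction.

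It remains to treat $|D| = 2$, say $D = \{a, b\}$ with $a \not\sim b$. If one of $u, v$ — call it $x$ — is adjacent to both $a$ and $b$, then by (i) again $N_G(x) \cap S' = \{a,b\} = D$, and as before $G[S' \cup \{x\}] \cong H$, a contradiction. Otherwise, a short case check from (ii) shows that, after possibly swapping $u \leftrightarrow v$ and $a \leftrightarrow b$, we may assume $u \sim a$, $v \sim b$, $u \not\sim b$, and $v \not\sim a$. I then look at $G[S' \cup \{u, v\}]$: by (i) both $u$ and $v$ are anticomplete to $S' \setminus \{a,b\}$, and together with the edge $uv$ and the edges $ua$, $vb$ this identifies $G[S' \cup \{u,v\}]$ as the linear forest $H^+$ obtained from $H$ by replacing $w$ inside its path component by the two-vertex path $u{-}v$, that is, by lengthening that path by one vertex. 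Deleting an end of the lengthened path recovers $H$ as an induced subgraph of $H^+$, so $G$ contains an induced $H$, the final contradiction. Linearity of $H$ is used only to bound $\deg w$ and to guarantee that $H$ survives inside $H^+$, so the argument is uniform over all linear forests.

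I expect the last case to be the only non-routine point: when neither $u$ nor $v$ dominates $\{a,b\}$, no single vertex can take the place of $w$, and one is forced to ``un-contract'' $w$ back into the edge $uv$; the observation that rescues this is that the resulting graph, although not isomorphic to $H$, is a linear forest $H^+$ that still contains $H$ as an induced subgraph. Everything else is bookkeeping with the definition of edge contraction.
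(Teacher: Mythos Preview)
Your proof is correct and follows essentially the same approach as the paper: both replace $w$ by one of $u,v$ whenever one of them covers all of $D$, and in the remaining case both observe that replacing $w$ by the edge $uv$ yields a one-edge subdivision of $H$ in a path component, which still contains $H$ as an induced subgraph. The only difference is cosmetic organization of the cases (the paper phrases the first case as ``$N(w)\cap V(P)\subseteq N(u)$'' rather than splitting on $|D|$).
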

\begin{proof}
    Assume for a contradiction that there exists some $H$-free graph $G$ and an edge $uv\in E(G)$ such that $G/ uv$ contains an induced copy of $H$. Call this induced copy $P$. Let $w \in V(G/uv)$ be the vertex corresponding to the contracted edge $uv$. If $w \notin V(P)$, then $P$ is an induced subgraph of $G$, contradicting that $G$ is $H$-free. Hence we may assume $w \in V(P)$. If $N(w) \cap V(P) \subseteq N(u)$, then $(V(P) \cup \{u\}) - \{w\}$ induces a copy of $H$ in $G$. Likewise, if $N(w) \cap V(P)\subseteq N(v)$, then $(V(P) \cup \{v\}) - \{w\}$ induces a copy of $H$ in $G$, a contradiction. Therefore, $w$ corresponds to a vertex of degree 2 in $P$, and furthermore, letting $w'$ and $w''$ denote the neighbours of $w$ in $P$, we have that each of $u$ and $v$ is adjacent to at most one (and therefore exactly one) of $w'$ and $w''$. By symmetry, we may assume that $u$ is adjacent to $w'$ and non-adjacent to $w''$, and that $v$ is adjacent to $w''$ and non-adjacent to $w'$. Now $(V(P) \cup \{u, v\}) \setminus \{w\}$ contains a subset of vertices inducing a copy of $H$; more precisely, $(V(P) \cup \{u, v\}) \setminus \{w\}$ is an induced subgraph of $G$ isomorphic to the graph obtained from $P$ by subdividing an edge in the component containing $w$.
\end{proof}

We also require an auxiliary graph construction, introduced as the the ``blob graph'' in \cite{gartland2021finding} and \cite{agrawal2024odd}.

\begin{definition}
    Let $G$ be a graph, and let $\mathcal{C}$ be a set of connected induced subgraphs of $G$. The \emph{blob graph} of $G$ and $\mathcal{C}$ is the graph $H = H(G, \mathcal{C})$ with $V(H) = \{v_C  :  C \in \mathcal{C}\}$ and for $v_C, v_{C'} \in V(H)$ we have $v_Cv_{C'} \in E(H)$ if and only if $V(C) \cap V(C') \neq \emptyset$ or there exist vertices $u \in V(C)$ and $u' \in V(C')$ such that $uu'\in E(G)$.
\end{definition}

Our use of $H(G, \mathcal{C})$ is analogous to that in \cite{agrawal2024odd}: In Lemma \ref{lem:p5freenice}, we show that if $G$ is $(P_5+rK_1)$-free, so too is $H$. Much later, in Lemma \ref{lem:givenCcanfindOPT}, we show that we can reduce solving WML$k$CIS on $G$ to solving \textsc{Maximum Weight Independent Set} on {$H(G,\mathcal{C})$ (with an appropriately chosen $\mathcal{C}$)}, which can be solved in polynomial time on graphs in $\mathcal{G}_r$ by Lemma \ref{lem:stable}. We first prove a result about $J$-free graphs for more general $J$, from which Lemma \ref{lem:p5freenice} follows as a corollary. 

Given a graph $G,$ we say vertices $u, v \in V(G)$ are \textit{true twins} if $N(u)\setminus \{v\} = N(v) \setminus \{u\}$ and $uv \in E(G)$. 

\begin{lemma}\label{lemma:ifJniceHisJfree}
Let $G$ and $J$ be graphs, let $\mathcal{C}$ be a set of connected induced subgraphs of $G$, and let $H = H(G,\mathcal{C})$. 
If all of the following properties hold, then $H$ is $J$-free. 
    \begin{enumerate}
        \item[(i)] $G$ is $J$-free, 
        \item[(ii)] $J$ has no true twins, and
        \item[(iii)] $J$-free graphs are closed under edge contraction.
    \end{enumerate}
\end{lemma}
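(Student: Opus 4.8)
The plan is to argue by contradiction: suppose $H = H(G,\mathcal{C})$ contains an induced copy of $J$, say on vertex set $\{v_{C_1}, \dots, v_{C_m}\}$ corresponding to subgraphs $C_1, \dots, C_m \in \mathcal{C}$. I want to construct, inside $G$, an induced subgraph that contracts down to $J$, which by hypothesis (iii) gives an induced copy of $J$ in $G$, contradicting (i). The natural candidate is to take $G' := G[V(C_1) \cup \dots \cup V(C_m)]$ and contract each $C_i$ to a single vertex; since each $C_i$ is connected, contracting its edges is a legitimate sequence of edge contractions, so the resulting graph is a contraction minor of an induced subgraph of $G$, hence (applying (iii) repeatedly) still $J$-free if $G$ is.

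The subtlety — and I expect this to be the main obstacle — is that the $V(C_i)$ need not be pairwise disjoint, and even when disjoint, the adjacency in $H$ records only whether there is \emph{at least one} edge between $C_i$ and $C_j$, so after contraction the quotient graph $G'/\{C_1,\dots,C_m\}$ is exactly the graph on $\{v_{C_1},\dots,v_{C_m}\}$ with $v_{C_i} \sim v_{C_j}$ iff $V(C_i) \cap V(C_j) \ne \emptyset$ or some edge joins them — which is precisely the induced subgraph $H[\{v_{C_1},\dots,v_{C_m}\}] \cong J$. So in fact the quotient \emph{is} $J$, provided the contraction is well-defined. The two things to check carefully are: (a) when $V(C_i) \cap V(C_j) \ne \emptyset$, contracting both $C_i$ and $C_j$ identifies $v_{C_i}$ and $v_{C_j}$ into a single vertex, which would make the quotient have fewer than $m$ vertices — but $v_{C_i} v_{C_j} \in E(H)$ in that case, so this merging is consistent with $J$ having an edge there only if $J$ tolerates it; and (b) this is exactly where hypothesis (ii) enters, since if two of the $C_i$'s overlap we would be collapsing two adjacent vertices of $J$ that have the same neighbourhood in the rest of $J$, i.e.\ true twins.

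Concretely, I would first show we may choose the $C_i$ to be \emph{pairwise vertex-disjoint}: if $V(C_i) \cap V(C_j) \ne \emptyset$, I claim $v_{C_i}$ and $v_{C_j}$ are true twins in $H[\{v_{C_1},\dots,v_{C_m}\}] \cong J$. Indeed they are adjacent (overlap gives an edge of $H$), and for any $\ell \notin \{i,j\}$, if $v_{C_\ell} \sim v_{C_i}$ then $C_\ell$ meets or has an edge to $C_i$, and since $C_i \cup C_j$ is connected and $C_i, C_j$ overlap, one checks $C_\ell$ also meets or has an edge to... — actually the cleaner statement: $C_i \cup C_j$ (as a subgraph of $G$) is connected, so $v_{C_\ell}$ is adjacent to $v_{C_i}$ iff it is adjacent to $v_{C_i \cup C_j}$ iff it is adjacent to $v_{C_j}$; wait, that last step needs $C_\ell$'s connection to land in the overlap, which is false in general. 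The correct fix is: we do not need disjointness. Instead, let $\sim$ be the equivalence relation on $\{1,\dots,m\}$ generated by $i \sim j$ when $V(C_i) \cap V(C_j) \ne \emptyset$; replace each class by the union of its members, obtaining pairwise-disjoint connected subgraphs $D_1, \dots, D_t$ of $G$ with $t \le m$. If $t < m$, then two $C_i$'s in the same class overlap, and I argue (using connectivity of the union and the edge/overlap definition of $E(H)$) that the corresponding $v_{C_i}$'s are true twins in $J$, contradicting (ii); hence $t = m$ and the $C_i$ may be taken disjoint after all.

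With disjointness in hand, set $G' = G[V(C_1) \cup \dots \cup V(C_m)]$, which is an induced subgraph of $G$ hence $J$-free by (i). Contract the edges of $C_1$, then $C_2$, and so on; each $C_i$ is connected so this is a well-defined sequence of edge contractions within the respective current graph, and by (iii) the final graph $G''$ is $J$-free. But $V(G'')$ has exactly $m$ vertices $w_1, \dots, w_m$ (one per $C_i$), and $w_i w_j \in E(G'')$ iff some vertex of $C_i$ is adjacent in $G$ to some vertex of $C_j$, i.e.\ iff $v_{C_i} v_{C_j} \in E(H)$ — so $G'' \cong H[\{v_{C_1},\dots,v_{C_m}\}] \cong J$, contradicting that $G''$ is $J$-free. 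This completes the argument; the only nonroutine point is the twin-contradiction step establishing disjointness, which is where (ii) is used.
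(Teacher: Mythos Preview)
Your overall strategy---find an induced $J$ in $H$, pull it back to an induced subgraph of $G$, contract each $C_i$ to a point, obtain $J$, contradiction---is sound, and is essentially what the paper does. The gap is in the disjointness step. You correctly notice that ``$V(C_i)\cap V(C_j)\neq\emptyset$ implies $v_{C_i},v_{C_j}$ are true twins in the induced $J$'' fails, but your proposed fix via the overlap equivalence relation has exactly the same defect. Concretely: let $G$ be the path $a\text{--}b\text{--}c\text{--}d\text{--}e$, take $C_1=\{a,b\}$, $C_2=\{b,c,d\}$, $C_3=\{e\}$. Then $C_1$ and $C_2$ overlap at $b$, yet in $H$ we have $v_{C_3}$ adjacent to $v_{C_2}$ (via the edge $de$) and not to $v_{C_1}$; so $v_{C_1},v_{C_2}$ are \emph{not} true twins in the induced $P_3$. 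Passing to equivalence classes changes nothing here: $\{C_1,C_2\}$ is one class, $\{C_3\}$ another, and you would still be asserting that $v_{C_1},v_{C_2}$ are twins. The point is that your twin claim uses only hypothesis~(ii), so it must hold for arbitrary $G$, and this example kills it.

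The paper sidesteps the disjointness question entirely. Rather than prove the $C_i$ are disjoint, it \emph{forces} disjointness by passing through an intermediate graph $H'$ with vertex set $\{v_{C,u}:C\in\mathcal{C},\,u\in V(C)\}$ and edges $v_{C,u}v_{C',u'}$ whenever $u=u'$ or $uu'\in E(G)$. For each vertex $u$ of $G$, all copies $v_{C,u}$ are pairwise true twins, so $H'$ is obtained from an induced subgraph of $G$ by adding true twins; hypothesis~(ii) then gives that $H'$ is $J$-free. Now the sets $\{v_{C,u}:u\in V(C)\}$ are genuinely pairwise disjoint and each induces a connected subgraph, so contracting them (using~(iii)) yields exactly $H$. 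This is the missing idea: duplicate shared vertices rather than argue they do not exist.
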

\begin{proof}
    Suppose $G$ and $J$ are graphs, $\mathcal{C}$ is a set of connected induced subgraphs of $G$, and claims (i)--(iii) hold for $G$ and $J$. Let $H = H(G, \mathcal{C})$.  We construct an alternate graph $H''$ from $G$ satisfying the above properties, then show that $H''$ is isomorphic to $H$. 
    
    To that end, we first need to define an auxiliary graph $H'$ from which we will obtain $H''$ via edge contraction. Let $H'$ be the graph with vertex set $V(H') = \{(C, u)  :  C \in \mathcal{C}, u \in V(C)\}$ and edge set $E(H') = \{(C, u)(C',u') : u = u' \text{ or } uu' \in E(G) \}$. We call $u$ the \emph{base vertex} of $(C, u)$. Note that every two vertices in $H'$ with the same base vertex are true twins in $H'$. 

    Suppose for a contradiction that a vertex subset $W \subseteq V(H')$ induces a copy of $J$ in $H'$. Since $J$ has no true twins by (ii), it follows that $H'[W]$ has no true twins, and so all vertices in $W$ have unique base vertices in $G$, as established above. However, by definition, two vertices in $H'$ with different base vertices are adjacent if and only if their base vertices are adjacent in $G$, so the set $\{u \in V(G) : (C, u) \in V(W) \textnormal{ for some } C \in \mathcal{C}\}$ induces a copy of $J$ in $G$, contradicting (i). Hence $H'$ is $J$-free.  

    Finally, let $H''$ be the graph obtained from $H$ by contracting, for each $C \in \mathcal{C}$, all edges with both ends in the set $\{(C, u)  :  u \in V(C)\}$. Since each $C \in \mathcal{C}$ is connected, it follows that all vertices in $\{(C, u)  :  u \in V(C)\}$ are contracted to a single vertex $v_C$ in $H''$. We now show $H''$ is isomorphic to $H$, thus completing the proof. By (iii), since $H''$ is obtained from $H'$ via edge contractions and $H'$ is $J$-free, it follows that $H''$ is also $J$-free. Note that two contracted vertices $v_C$ and $v_C'$ in $H'$ are adjacent if and only if there exist adjacent vertices $(C, u)$ and $(C', u')$ in $H$, which occurs if and only if $u = u'$ or $uu' \in E(G)$. Additionally, $u = u'$ implies $V(C) \cap V(C') \not = \emptyset$. Therefore $H''$ is isomorphic to $H$: Both graphs have vertex sets in bijective correspondence with $\mathcal{C}$, and the edge relationships are identical. Therefore,  $H$ is also $J$-free, as desired.
\end{proof}

 Lemmas \ref{lemma:P5+rk1freegraphsnice} and \ref{lemma:ifJniceHisJfree} immediately imply the following result. 
\begin{lemma}
\label{lem:p5freenice}
    Given a graph $G \in \mathcal{G}_r$ and a set $\mathcal{C}$ of connected induced subgraphs of $G$, the graph $H(G,\mathcal{C})$ is also in $\mathcal{G}_r$.
\end{lemma}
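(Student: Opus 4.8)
The plan is to apply Lemma~\ref{lemma:ifJniceHisJfree} with $J = P_5 + rK_1$, so that it suffices to verify its three hypotheses for this choice of $J$. Hypothesis (i) is immediate, since $G \in \mathcal{G}_r$ means precisely that $G$ is $(P_5+rK_1)$-free. Hypothesis (iii) follows from Lemma~\ref{lemma:P5+rk1freegraphsnice}: $P_5 + rK_1$ is a linear forest (the disjoint union of the path $P_5$ with $r$ single-vertex paths), and that lemma asserts that the class of $H$-free graphs is closed under edge contraction for every linear forest $H$.

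The only remaining point is hypothesis (ii): that $J = P_5 + rK_1$ contains no pair of true twins. First I would note that none of the $r$ isolated vertices can belong to such a pair, since true twins must be adjacent whereas an isolated vertex has no neighbours. Writing the vertices of the $P_5$ in path order as $p_1p_2p_3p_4p_5$, the only candidate pairs are the consecutive ones $\{p_i,p_{i+1}\}$. For $\{p_1,p_2\}$ we have $N(p_1)\setminus\{p_2\} = \emptyset \neq \{p_3\} = N(p_2)\setminus\{p_1\}$; for $\{p_2,p_3\}$ we have $N(p_2)\setminus\{p_3\} = \{p_1\} \neq \{p_4\} = N(p_3)\setminus\{p_2\}$; and the remaining two consecutive pairs follow by the symmetry of $P_5$. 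Hence no two vertices of the $P_5$ are true twins, and since no vertex of the $P_5$ is adjacent to any of the isolated vertices, there are no cross pairs of true twins either; thus (ii) holds.

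With all three hypotheses of Lemma~\ref{lemma:ifJniceHisJfree} verified, that lemma yields that $H(G,\mathcal{C})$ is $(P_5+rK_1)$-free, i.e.\ $H(G,\mathcal{C}) \in \mathcal{G}_r$, as desired. I do not expect any real obstacle: the substantive work has already been carried out in Lemmas~\ref{lemma:P5+rk1freegraphsnice} and~\ref{lemma:ifJniceHisJfree}, and the only thing requiring care is the routine bookkeeping in the true-twins verification.
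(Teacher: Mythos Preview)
Your proposal is correct and follows exactly the same approach as the paper: apply Lemma~\ref{lemma:ifJniceHisJfree} with $J = P_5 + rK_1$, using Lemma~\ref{lemma:P5+rk1freegraphsnice} for hypothesis (iii) and the observation that $P_5 + rK_1$ has no true twins for hypothesis (ii). The paper's proof is more terse, simply asserting the absence of true twins without the case analysis you provide, but the argument is identical.
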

\begin{proof}
    For all $r$, the graph $P_5+rK_1$ contains no true twins, and by Lemma \ref{lemma:P5+rk1freegraphsnice}, we have that $(P_5+rK_1)$-free graphs are closed under edge contraction. Hence it follows from Lemma \ref{lemma:ifJniceHisJfree} that $H(G,\mathcal{C})$ is $(P_5+rK_1)$-free. 
\end{proof}

Our solution to MW$k$CIS consists a number of (pairwise anticomplete) $L$-colourable connected components, each of which will be selected from a larger set $\mathcal{C}$ of induced subgraphs of our underlying graph, $G$. The auxiliary graph $H(G,\mathcal{C})$ is used to select, in polynomial time, a maximum-weight set of pairwise anticomplete components from $\mathcal{C}$. 

\begin{lemma}\label{lem:givenCcanfindOPT}
    Let $G \in \mathcal{G}_r$ and let $L$ be a list assignment such that $L(v) \subseteq [k]$ for all $v \in V(G)$. Let $\mathcal{C}$ be a list of (some of the) connected $L$-colourable induced subgraphs of $G$. Suppose that there exists a maximum-weight $L$-colourable induced subgraph of $G$, denoted by $OPT$, such that each component of $OPT$ is in $\mathcal{C}$. Then the WML$k$CIS problem can be solved for $G$ in polynomial time (in $|\mathcal{C}|$ and $v(G)$). 
\end{lemma}
\begin{proof}
    We construct the auxiliary graph $H = H(G,\mathcal{C})$ with running time polynomial in $|\mathcal{C}|$ and $v(G)$. By Lemma \ref{lem:p5freenice} it holds that $H \in \mathcal{G}_r$. For each $v_C \in V(H)$ we define $\omega(v_C) = \sum_{v \in V(C)} \omega(v)$. It suffices to find a maximum weight independent set (MWIS) in $H$ with respect to these weights. 

    Since $r$ is fixed, Lemma \ref{lem:stable} implies that we can find a maximum-weight independent set in $H$ in polynomial time (with respect to $v(H)$). As $v(H)=|\mathcal{C}|$, this can be done in polynomial time with respect to $|\mathcal{C}|$.
\end{proof}

The principal work of Section \ref{sec:p5rk1} is to produce the list $\mathcal{C}$ of ``candidate'' components. We end this section with two lemmas about important subgraphs of $(P_5+rK_1)$-free graphs. The first lemma concerns the identification of small dominating subgraphs in $k$-colourable induced subgraphs of $(P_5+rK_1)$-free graphs. For this, we will require the following result due to Camby and Schaudt \cite{camby2016new}.

\begin{lemma}[Camby \& Schaudt \cite{camby2016new}]\label{lem:cambyschaudt}
    If $C$ is a connected, $P_5$-free graph, then $C$ contains a connected, dominating subgraph $S$ that is either a clique or has at most three vertices.  
\end{lemma}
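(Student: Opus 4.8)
This is a known result (proven by Camby and Schaudt; for $P_5$ it is equivalent to the classical theorem of Bacsó and Tuza that a connected $P_5$-free graph has a dominating clique or a dominating $P_3$), and the plan is to reprove it in that spirit. I would take $D \subseteq V(C)$ to be a connected dominating set of $C$ — that is, $C[D]$ is connected and $D$ is a dominating set of $C$ — chosen minimal under inclusion, and set $S := C[D]$. Since a connected graph on at most three vertices is a clique or is $P_3$, I may assume $v(S) \ge 4$, and it then suffices to show $S$ is a clique. The one consequence of minimality used throughout: for every $x \in D$ the set $D \setminus \{x\}$ is not a connected dominating set, hence either $x$ is a cut-vertex of $S$, or $x$ has a \emph{private neighbour} $x^\ast \in V(C) \setminus D$ with $N(x^\ast) \cap D = \{x\}$ (otherwise $D\setminus\{x\}$ would still dominate, so it would fail only to be connected).

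First I would show that $S$ is $P_4$-free. Suppose $x_1 x_2 x_3 x_4$ induces a $P_4$ in $S$. If $x_1$ has a private neighbour $w$, then $w x_1 x_2 x_3 x_4$ induces a $P_5$ in $C$. Otherwise $x_1$ is a cut-vertex of $S$; taking a neighbour $y$ of $x_1$ lying in a component of $S - x_1$ different from the one containing $\{x_2,x_3,x_4\}$, the vertices $y x_1 x_2 x_3 x_4$ induce a $P_5$ in $C$. Either way we contradict $P_5$-freeness, so $S$ is a connected $P_4$-free graph; such a graph, unless complete, decomposes as a join $S = S[A] \vee S[B]$ for nonempty $A, B$ with $A \cap B = \emptyset$, $A \cup B = V(S)$, and every vertex of $A$ adjacent to every vertex of $B$ (equivalently, the complement of $S$ is disconnected). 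So we may assume $S$ is not complete and fix such a decomposition.

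Now, since $S$ is not a clique, pick non-adjacent $u, v \in D$; they lie in a common part, say $A$, so $S - u = S[A \setminus \{u\}] \vee S[B]$ is a join of nonempty graphs, hence connected, and thus $u$ — and by symmetry $v$ — is not a cut-vertex of $S$; therefore both have private neighbours $u^\ast, v^\ast \in V(C) \setminus D$. Choosing any $b \in B$, we have $u \sim b \sim v$ and $u \not\sim v$, so $\{u^\ast, u, b, v, v^\ast\}$ induces a $P_5$ unless $u^\ast \sim v^\ast$; hence $u^\ast \sim v^\ast$. Using $v(S) \ge 4$, I would then bring in a further vertex $w$ of $S$ (a third vertex of $A$, or a second vertex of $B$) and repeat this style of argument: $w$, if not a cut-vertex, has its own private neighbour $w^\ast$, $P_5$-freeness forces an adjacency such as $w^\ast \sim u^\ast$ or $w^\ast \sim v^\ast$, and these forced edges among the private neighbours in turn yield an induced $P_5$ together with vertices from $\{u, v, b\}$ — until a contradiction is reached, showing $v(S)\le 3$ after all.

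I expect this last step to be the real obstacle. The cograph structure of $S$ alone is far too weak — there are arbitrarily large connected non-complete cographs — so one genuinely has to exploit the private neighbours guaranteed by minimality, and organizing the resulting cascade of ``an induced $P_5$ appears unless\ldots'' implications into a clean finite case analysis (split according to whether $|A| \ge 3$ or $|B| \ge 2$, and on the adjacencies inside $A$ and inside $B$) is the delicate part. Everything preceding it — the minimality dichotomy, the $P_4$-freeness of $S$, and the cograph decomposition — is routine.
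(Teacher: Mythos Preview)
The paper does not prove this lemma; it is quoted from Camby and Schaudt (and, as you note, is essentially the $P_5$ case of Bacs\'o--Tuza) and used as a black box. So there is no ``paper's own proof'' to compare against.

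Your outline follows the standard minimal-connected-dominating-set approach, and the first two thirds are correct: the dichotomy ``$x$ is a cut-vertex of $S$ or $x$ has a private neighbour outside $D$'' is exactly the right consequence of minimality; your argument that $S$ is $P_4$-free is clean; and the join decomposition of a connected non-complete cograph, together with the observation that a vertex in $A$ is never a cut-vertex while $B$ is nonempty, is all sound. The forced edge $u^\ast v^\ast$ is also correct.

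The gap is exactly where you say it is. You have not carried out the final case analysis, and it is not as mechanical as the preceding steps: with $|B|\ge 2$ one must also invoke private neighbours for vertices of $B$ and chase several forced adjacencies among $u^\ast,v^\ast,b^\ast$ before an induced $P_5$ such as $u\text{--}b\text{--}v\text{--}v^\ast\text{--}b'^\ast$ (or a symmetric variant) appears; with $|B|=1$ one has to recurse into the cotree of $S[A]$ or handle the possibility that the unique $b\in B$ is itself a cut-vertex. None of this is deep, but it is a genuine case split that your write-up only gestures at. As it stands the proposal is a correct plan with an honestly-flagged hole, not a complete proof.
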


Using Lemma \ref{lem:cambyschaudt}, we prove the following result which will be crucial for bounding the number of connected subgraphs we need to consider in the algorithm behind Theorem \ref{thm:mainthmp5}. 

\begin{lemma}\label{lem:smallconnecdomset}
    If $C$ is a connected induced $k$-colourable subgraph of a graph $H \in \mathcal{G}_r$, then $C$ has a connected dominating subgraph on at most $\max\{k, 3, (k+1)(r-1) + 5\}$ vertices.
\end{lemma}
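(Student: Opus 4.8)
The plan is to argue directly, without induction on $r$, splitting into two cases according to whether $C$ contains an induced $P_5$. If $C$ is $P_5$-free, then since $C$ is connected, Lemma~\ref{lem:cambyschaudt} hands us a connected dominating subgraph $S$ of $C$ that is either a clique or has at most three vertices; in the first case $S$ is a clique of the $k$-colourable graph $C$ and hence $v(S)\le k$, and in the second case $v(S)\le 3$, so in either case $v(S)$ does not exceed $(k+2)(r-1)+5$. So from now on assume $C$ contains an induced $P_5$; fix one and let $Q\subseteq V(C)$ be its vertex set, so that $C[Q]\cong P_5$.

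Let $Y:=V(C)\setminus\bigl(Q\cup N(Q)\bigr)$, the set of vertices of $C$ anticomplete to $Q$. I would first record two consequences of $C$ being $(P_5+rK_1)$-free and $k$-colourable. First, $C[Y]$ has at most $r-1$ connected components: picking one vertex from each component yields an independent set anticomplete to $Q$, so together with $Q$ it induces $P_5+tK_1$ in $C$, where $t$ is the number of components, and hence $t\le r-1$. Second, $|Y|\le k(r-1)$: the same trick of appending an independent subset of $Y$ to $Q$ shows that $C[Y]$ has no independent set of size $r$, and since $C[Y]$ is $k$-colourable a largest colour class has at least $|Y|/k$ vertices, so $|Y|/k\le r-1$.

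It remains to assemble a connected dominating subgraph. If $Y=\emptyset$ then $Q$ already dominates $C$ and $v(C[Q])=5$. Otherwise, for each component $W$ of $C[Y]$ choose — using that $C$ is connected and that $W$ is anticomplete to $Q$, so that every edge leaving $W$ has its other end in $N(Q)$ — a vertex $a_W\in N(Q)$ with a neighbour in $W$. Let $S$ be the subgraph of $C$ induced on $Q$, the chosen vertices $a_W$, and all of $Y$. Then $S$ is connected: $C[Q]$ is connected, each $a_W$ is adjacent to a vertex of $Q$ and to a vertex of the connected graph $C[W]$, and these $W$ exhaust $Y$. And $S$ dominates $C$, since $Q$ dominates $Q\cup N(Q)=V(C)\setminus Y$ while $Y\subseteq V(S)$. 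Counting, and using the two facts above,
\[
 v(S)\ \le\ 5+(\text{number of components of }C[Y])+|Y|\ \le\ 5+(r-1)+k(r-1)\ =\ (k+1)(r-1)+5,
\]
which is within the claimed bound.

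The argument is short; the points that need care are the two counting claims of the middle step — each proved by appending an independent subset of $Y$ to the fixed induced $P_5$ and invoking $(P_5+rK_1)$-freeness — and the choice of connectors, which must be taken one per component of $C[Y]$ and inside $N(Q)$ so that the assembled subgraph is simultaneously connected and of size bounded independently of $v(C)$. The hypothesis that $C$ is $k$-colourable enters only to convert "$C[Y]$ has no independent set of size $r$" into "$|Y|\le k(r-1)$"; this is what lets us dump all of $Y$ into the dominating subgraph rather than recursing, and I expect this step — realizing that no recursion is needed once the colourability bound is in hand — to be the one genuinely clarifying idea rather than the one obstacle.
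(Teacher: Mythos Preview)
Your proof is correct and follows essentially the same approach as the paper: handle the $P_5$-free case via Camby--Schaudt, and otherwise fix an induced $P_5$, bound both the number of components and the total size of $Y=V(C)\setminus(Q\cup N(Q))$ using $(P_5+rK_1)$-freeness together with $k$-colourability, then attach $Y$ to $Q$ through connectors in $N(Q)$. Your construction is in fact slightly cleaner than the paper's---you include all of $Y$ and need only one connector per component, whereas the paper takes a dominating subgraph of each component and allows up to two connectors---which is why you end up with the marginally sharper bound $(k+1)(r-1)+5$.
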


\begin{proof}
    If $C$ is $P_5$-free, then the result follows from Lemma \ref{lem:cambyschaudt}. In particular, since $C$ is $k$-colourable, it does not contain a clique of size $k+1$, so some connected dominating subgraph contains at most $\max\{k, 3\}$ vertices. 
    
    We may assume, then, that $C$ contains an induced subgraph $P$ isomorphic to $P_5$, and accordingly $r \ge 1$. Let $C'$ be the graph induced by the vertex set $V(C) \setminus N \langle P \rangle$. Since $C \subseteq H$ and $H \in \mathcal{G}_r$, it follows that the independence number of $C'$ is at most $r-1$, and hence that $C'$ has at most $r-1$ components. Since $C$ is by assumption $k$-colourable (that is, its vertices can be partitioned into at most $k$ independent sets), it follows that $v(C') \le k(r-1)$. For each of the components of $C'$, choose a vertex in $N(P) \cap V(C)$ adjacent in $H$ to that component; call this set of vertices $M$. 
    
    Let $S = V(P) \cup V(C') \cup M$. Note that $C[S]$ is connected via $M$, dominates $C$, and $|S| \le 5 + k(r-1) + (r-1) = (k+1)(r-1) + 5$, as desired.
\end{proof}


\section{WML$k$CIS in $(P_5 + rK_1)$-free graphs}\label{sec:p5rk1}

This section contains the proof of Theorem \ref{thm:mainthmp5}. Before we begin, we give a brief outline of the main ideas in the proof, which follows the same general structure as that of Agrawal et al. \cite{agrawal2024odd}. Throughout this section, fix $k$ and $r$ to be non-negative integers. Let $G$ be a graph in $\mathcal{G}_r$, let $\omega$ be a weight function with $\omega: V(G) \rightarrow \mathbb{R}_{+}$, and let $L$ be a list-$k$-assignment for $G$. First, we enumerate all possibilities for the polynomially-many small connected induced subgraphs promised by Lemma \ref{lem:smallconnecdomset}. We then use Algorithm \ref{refining_algorithm} to turn those subgraphs (and some ancillary sets) into a polynomially-sized set $\mathcal{C}$ of $L$-colourable connected induced subgraphs of $G$. We next show that there exists a solution $OPT$ to the WML$k$CIS problem for $G$ such that each component of $OPT$ is in $\mathcal{C}$ (Lemma \ref{lem:solwithallinC}), and finally that such a solution $OPT$ can be found in polynomial time (Lemma \ref{lem:givenCcanfindOPT}).

Each element of $\mathcal{C}$ is necessarily a connected, induced $L$-colourable subgraph of $G$. We obtain each graph $C \in \mathcal{C}$ from a small vertex subset $S$ inducing a connected dominating subgraph of $C$, together with an $L$-colouring of $G[S]$, and a number of ancillary sets. These form a \emph{canvas}: a tuple consisting of a subset $S$ of $V(G)$ with at most $\max\{k, 3, (k+1)(r-1) + 5\}$ vertices such that $G[S]$ is connected, an $L$-colouring  $f$ of $G[S]$, and several sets of independent sets of $V(G)$ that are used to extend $f$ to a subset of $N(S)$ and capture key properties of a possible WML$k$CIS solution. The definition is followed by a detailed explanation of each part. After defining canvases below, we show in Lemma \ref{lem:Csmall} that there are a polynomial number of canvases of $G$. We then produce a connected induced $L$-colourable subgraph $C$ of $G$ from each canvas (Algorithm \ref{refining_algorithm} and Lemma \ref{lem:solqvalid}). Ultimately, this graph $C$ will be a candidate for a connected component of a solution to the WML$k$CIS problem. From the set of all such candidate components, we compute an optimal solution to WML$k$CIS in polynomial time (Lemma \ref{lem:givenCcanfindOPT}).

\begin{definition}\label{def:canvas}
Given a graph $G \in \mathcal{G}_r$ and a list-$k$-assignment $L$ for $G$, a \emph{canvas} is a tuple  
$$Q = \left(S,f,\{A_c\}_{c \in [k]}, \{B_c\}_{c \in [k]}, \{Y_{i,c,\ell}\}_{\substack{i,\ell \in [|S|] \\  i < \ell \\ c \in [k]}}, \{Z_{i,c}\}_{\substack{i \in [|S|] \\ c \in [k]}}\right) $$ such that the following ten properties hold: 
\begin{enumerate}
    \item[(Set $S$)] $S$ is an ordered subset of $V(G)$ with $t = |S| \leq \max\{k, 3, (k+1)(r-1) + 5\}$ such that $G[S]$ is connected. Let $v_1, \ldots, v_t$ be the ordering of the elements of $S$, and for all $1 \le i \le t$, let $X_i= N_{G}(v_i) \setminus (S \cup (\bigcup_{1 \le j < i} X_j)).$ 

    \item[(Col.\ $f$)] The function $f$ is an $L$-colouring of  $G[S]$.

    \item[(Sets $A$)] For every colour $c \in [k]$, $A_c$ is an independent set of at most $2k$ vertices in $N(S)$ where $c \in L(v)$ for each $v \in A_c$. 
    
    \item[(Sets $B$)] For every colour $c \in [k]$, $B_c$ is an independent set of at most $r$ vertices in $V(G) \setminus N\langle S \rangle$ where $c \in L(v)$ for each $v \in B_c$.
    
    \item[(Sets $Y$)] For every pair of distinct indices $i, \ell \in [t]$ with $i < \ell$ and every colour $c \in [k]$, the set $Y_{i,c,\ell} \subseteq X_i$ is an independent set of at most two vertices in $N(S)$ where $c \in L(v)$ for each $v \in Y_{i,c,\ell}$. 
    
    \item[(Sets $Z$)] For every index $i \in [t]$ and every colour $c \in [k]$, $Z_{i,c} \subseteq X_i$ is an independent set of at most $r$ vertices where $c \in L(v)$ for each $v \in Z_{i,c}$. 

    \item[(Small $Z$)] If there exists some index $i \in [t]$ and colour $c \in [k]$ such that $|Z_{i, c}| < r$, then for all $\ell \in [t]$ with $\ell > i$, we have $Y_{i, c, \ell} \subseteq Z_{i, c}$, and $A_c \cap X_i \subseteq Z_{i, c}.$
    
    \item[(Disjoint)] For every pair of distinct colours $c, \overline{c} \in [k]$, we have that $A_{\overline{c}} \cap A_c = \emptyset, \; B_{\overline{c}} \cap B_c = \emptyset, \; Z_{i,\overline{c}} \cap Z_{i,c} = \emptyset,$ and $Y_{i,\overline{c},\ell} \cap Y_{i,c,\ell} = \emptyset$. Additionally, for every pair of indices $i, \ell \in[t]$ with $i < \ell$ and every pair of distinct colours $c, \overline{c} \in [k]$, we have that $A_c \cap Y_{i, \overline{c}, \ell} = \emptyset, \; A_c \cap Z_{i, \overline{c}}= \emptyset,$ and $Y_{i, c, \ell} \cap Z_{i, \overline{c}} = \emptyset$.
    
    \item[(Colour)]
    For every colour $c$, we have that $\{v_i \in S : f(v_i) = c\} \cup A_c \cup \bigcup_{\substack{i, \ell \in [t] \\ i < \ell}} Y_{i, c, \ell} \cup Z_{i, c}$  is an independent set. 
    

    \item[(Comps.)] The set $\bigcup_{c \in[k]}B_c$ is anticomplete to $\bigcup_{c \in [k]} \left(A_{c} \cup \bigcup_{\substack{i, \ell \in [t] \\ i < \ell}} Y_{i, c, \ell} \cup Z_{i, c}\right)$.
\end{enumerate}
\end{definition}

Here we will give some motivation for each condition in the definition of a canvas. Recall that from each canvas we produce a candidate connected component $C$ for a solution to the WML$k$CIS problem. By checking all possible canvases, we eventually find some canvas from which we can obtain an optimum solution. Each of the sets in Definition \ref{def:canvas} is a guess at some of the vertices and their colours in an $L$-colouring of an optimum solution. 

In (Set $S$), the ordered set $S$ is a guess at the vertex set of a small connected, dominating subgraph of $C$, guaranteed to exist by Lemma \ref{lem:smallconnecdomset}. For the correct choice of $S$, we have $V(C) \subseteq N\langle S \rangle$. The sets $X_i$ partition $N(S)\setminus V(S)$: Each vertex in $N(S)\setminus V(S)$ is placed into a set $X_i$ depending on its first neighbour in the ordering of $V(S)$. In Algorithm \ref{refining_algorithm} we find the remainder of component $C$ by considering each set $X_i$ individually. We also guess an $L$-colouring $f$ of $S$ in condition (Col. $f$). 

For each colour $c$, the set $A_c$ is a guess of at most $2k$ vertices coloured $c$ in the subgraph $C$ that are not contained in $S$. As such, condition (Sets $A$) specifies that each $A_c$ is an independent set and each vertex in $A_c$ can be coloured $c$. Likewise in condition (Sets $B$), the set $B_c$ is our guess of at most $r$ vertices coloured $c$ in \emph{other} components of the solution. If $|B_c| < r$, then we assume that $B_c$ includes all vertices coloured $c$ in other components of the solution. Condition (Comps.) ensures that these sets $B_c$ are anticomplete to all vertices of our guess in $C$, since distinct components anticomplete. The sets $A_c$ and $B_c$ encode all the information that we need to ensure that componenents computed from this canvas are ``compatible'' with the remainder of the solution; this will be made precise in Lemma \ref{lem:solwithallinC}. 

For every pair of distinct indices $i, \ell \in [t]$ with $i < \ell$ and every colour $c$, the set $Y_{i, c, \ell}$ referenced in condition (Sets $Y$) is a set of at most two vertices in $C$ coloured $c$ that are contained in $X_i$. As detailed in condition (Sets $Z$), the set $Z_{i, c}$ is our guess of a set of at most $r$ vertices from $X_i$ that are coloured $c$ in $C$. The purpose of the sets $Y_{i, c, \ell}$ and $Z_{\ell, c}$ is to provide enough information about the vertices in $X_i \cap C$ and $X_\ell \cap C$ which receive colour $c$ to be able to independently pick the remaining vertices of colour $c$ in $X_i \cap C$ and $X_\ell \cap C$. If there are fewer than $r$ vertices coloured $c$ in $X_i$, we will assume later on that $Z_{i, c}$ is the set of \emph{all} vertices in $C$ coloured $c$ that are contained in $X_i$.  Condition (Small $Z$) ensures that the rest of our guess is consistent with this assumption. 

To ensure our guesses ultimately result in a proper $L$-colouring of $C$, condition (Disjoint) guarantees that we guess at most one colour per vertex and condition (Colour) guarantees that each of our guessed colour classes is an independent set. 

Recall that $k$ and $r$ are fixed constants (and treated as such when computing running times and sizes of sets). In what follows, let $G$ be a fixed graph in $\mathcal{G}_r$ and let $L$ be a fixed list-$k$-assignment for $G$. Let $\mathfrak{Q}_L$ be the set of canvases for $G$ formed using the list assignment $L$. In the following lemma we show that $\mathfrak{Q}_L$ is polynomially-sized. 

\begin{lemma}\label{lem:Csmall}
    $|\mathfrak{Q}_L| \in \textnormal{poly}(v(G))$, and $\mathfrak{Q}_L$ can be computed in \textnormal{poly}$(v(G))$-time.
\end{lemma}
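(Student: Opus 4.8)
The goal is to bound $|\mathfrak{Q}_L|$ by a polynomial in $v(G)$ and show that the set of all canvases can be listed in polynomial time. The natural approach is to bound the number of choices for each coordinate of the tuple $Q$ separately, and then multiply: since a canvas is a tuple with a bounded number of coordinates (bounded because $k$ and $r$ are fixed, and $|S| \le (k+2)(r-1)+5$ is a fixed constant $t_{\max}$), a product of polynomially many polynomial bounds is still polynomial.

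First I would handle $S$ and $f$. The set $S$ is an ordered subset of $V(G)$ of size at most $t_{\max} := (k+2)(r-1)+5$, so there are at most $\sum_{t=0}^{t_{\max}} v(G)^t \le (t_{\max}+1)\,v(G)^{t_{\max}}$ choices, which is polynomial; connectivity of $G[S]$ is checkable in polynomial time and only restricts the set further. Given $S$, the ordering determines the sets $X_1,\dots,X_t$ (these are computed, not guessed — just repeatedly take neighbourhoods and remove previously used vertices), and the colouring $f$ is one of at most $k^{|S|} \le k^{t_{\max}}$ functions, a constant. So the number of pairs $(S,f)$ is $\mathrm{poly}(v(G))$, and each such pair, together with its induced sets $X_i$, is computable in polynomial time.

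Next I would bound the remaining coordinates, all of which are tuples (indexed by colours $c \in [k]$, or by pairs $i,\ell \in [t]$, or by indices $i \in [t]$) of small independent sets of $V(G)$. The key observation is that each individual set appearing in the tuple — $A_c$ (size $\le 2k$), $B_c$ (size $\le r$), $Y_{i,c,\ell}$ (size $\le 2$), $Z_{i,c}$ (size $\le r$) — has size bounded by a fixed constant, so there are at most $v(G)^{O(1)}$ choices for each one; an independent set of size $\le s$ is just a subset of $V(G)$ of size $\le s$, of which there are $O(v(G)^s)$, and one can enumerate them and discard the non-independent ones in polynomial time. The number of \emph{indices} over which these families range is itself bounded by a constant: there are $k$ colours, at most $\binom{t_{\max}}{2}$ pairs $(i,\ell)$, and at most $t_{\max}$ values of $i$, all constants since $k,r$ are fixed. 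Hence the number of choices for the whole family $\{A_c\}_c$ is at most $(v(G)^{2k})^{k}$, for $\{B_c\}_c$ at most $(v(G)^r)^k$, for $\{Y_{i,c,\ell}\}$ at most $(v(G)^2)^{k\binom{t_{\max}}{2}}$, and for $\{Z_{i,c}\}$ at most $(v(G)^r)^{k\,t_{\max}}$ — each polynomial in $v(G)$. Multiplying the bounds for all coordinates gives $|\mathfrak{Q}_L| \in \mathrm{poly}(v(G))$.

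Finally, for the computational claim, I would describe the obvious generate-and-test procedure: enumerate all candidate tuples within the size bounds above (a polynomial-size search space), and for each, check in polynomial time that all ten properties (Set $S$) through (Comps.) hold — each property is a conjunction of adjacency/membership/independence/anticompleteness conditions on constantly many constant-size sets, hence verifiable in time polynomial in $v(G)$. Keeping exactly the tuples that pass produces $\mathfrak{Q}_L$ in polynomial time. I do not anticipate a genuine obstacle here; the only thing to be careful about is bookkeeping — making sure that every one of the coordinates really does have constant-bounded cardinality and that the index sets are constant-sized, so that no hidden $v(G)$-dependence sneaks into an exponent. The properties (Small $Z$), (Disjoint), (Colour), and (Comps.) impose no difficulty since they only cut down the set of tuples and are polynomial-time checkable.
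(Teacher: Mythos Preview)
Your proposal is correct and follows essentially the same approach as the paper: bound the number of ordered sets $S$ (polynomial since $|S|$ is bounded by a constant), observe that $f$ ranges over constantly many colourings, and that each of the remaining families consists of constantly many sets each of constant-bounded size, giving polynomially many tuples to enumerate and then filter by checking the remaining conditions. The paper's proof is simply a terser version of exactly this argument, splitting the ten conditions into the first six (used to bound the enumeration) and the last four (checked after enumeration).
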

\begin{proof}
There are polynomially many subsets of $V(G)$ of order at most ${\max\{k, 3, (k+1)(r-1) + 5\}}$. Each of these sets has a constant number of orderings, so there are polynomially many ordered sets $S$ from which canvases can be built. For each ordered set $S$, since $k$ is a constant, the number of $L$-colourings of $G[S]$ is also bounded by a constant. We can enumerate all tuples satisfying the first six conditions in polynomial time, and then check in polynomial time which of them also satisfy the last four conditions. Hence for each ordered set $S$ there are polynomially many canvases $Q$ whose first entry is $S$, and they can be computed in polynomial time.
\end{proof}

Below, we define a number of auxiliary sets related to a canvas $Q$. In Definition \ref{def:associated}, we use these auxiliary sets to define a relation between graphs and canvases which we call \emph{being associated with} a canvas. This relation formalizes the way our guessed component $C$ intersects with the elements of the canvas, as outlined after Definition \ref{def:canvas}.

\begin{definition}\label{defs:QSoperators}
    Consider a canvas $Q = \left(S,f,\{A_c\}_{c \in [k]}, \{B_c\}_{c \in [k]}, \{Y_{i,c,\ell}\}_{\substack{i,\ell \in [|S|] \\  i < \ell \\ c \in [k]}}, \{Z_{i,c}\}_{\substack{i \in [|S|] \\ c \in [k]}}\right) $. We write $v_1, \dots, v_{t}$ for the vertices of $S$ in order; and for each $i \in [t]$, we let $X_i$ be the set $N_{G}(v_i) \setminus \left(S \cup \bigcup_{1 \le j < i} X_j\right)$. Let $M := V(G) \setminus N\langle S \rangle$ and let $W := \{v \in M :  N_{G}(v) \cap B_c \neq \emptyset \text{ for every } c\in [k] \text{ such that } |B_c| = r\} \setminus \bigcup_{c \in [k]} B_c$. 
\end{definition}

The motivation behind the definitions of the sets $M$ and $W$ is as follows. Suppose the canvas $Q$ produces a component $C$ of an optimum solution $OPT$. The vertices in other components of $OPT$ have no neighbour in $C$. Since $S$ is a dominating subset of $V(C)$, it follows that vertices of $V(OPT) \setminus V(C)$ are contained in $M$. Furthermore, recall that $B_c$ is a set of vertices coloured $c$ in $OPT\setminus C$. Thus, if a vertex $v$ has a neighbour in $B_c$ for some $c$, then if $v$ is in $OPT$, 
it follows that $v$ is not coloured $c$. By assumption, if $|B_c| < r$, then $B_c$ includes all vertices in $OPT$ coloured $c$. Thus, if $v$ has a neighbour in all sets $B_c$ of order $r$ and is not contained in any of the sets $B_c$, then $v$ is not in OPT. That is: $W$ is a subset of the vertices in $M$ that are not contained in $OPT$.

\begin{definition}
\label{def:associated}
    Given a canvas $ Q =\left(S,f,\{A_c\}_{c \in [k]}, \{B_c\}_{c \in [k]}, \{Y_{i,c,\ell}\}_{\substack{i,\ell \in [|S|] \\  i < \ell \\ c \in [k]}}, \{Z_{i,c}\}_{\substack{i \in [|S|] \\ c \in [k]}}\right) $ of a graph $G$ with list assignment $L$, we say that an $L$-colourable induced subgraph $C \subseteq G$ is \emph{associated with $Q$} if the following conditions all hold:
    \begin{enumerate}[(i)]
        \item $S \subseteq V(C)$; 
        \item $S$ dominates $C$; 
        \item there exists an $L$-colouring $f'$ of $C$ such that $f'|_S = f$; and 
        \item for every $i \in [t]$ and $v \in V(C) \cap X_i$, the following conditions hold: 
        \begin{enumerate}[(a)]
            \item $v$ is anticomplete to $\bigcup_{c \in [k]} B_c$;
            \item for each $c \in [k]$ and each $j,\ell \in [t]$ with $i < \ell$, if $v \in A_c \cup Y_{i,c,\ell} \cup Z_{j,c}$ we have that $f'(v) = c$; and 
            \item if $v$ is not included in any $A_c$, $Y_{i,c,\ell}$, or $Z_{j,c}$, then $f'(v)$ is not an element of any of the following sets: 
    \begin{enumerate}[1.]
        \item $\{f(v_j) : j \in[t] \text{ and } vv_j \in E(G) \}$
        \item $\{c: (N(v) \cap M) \setminus W \not \subseteq N(A_c)\}$ 
        \item $\left\{c: v \in N\left(A_c \cup B_c\cup \bigcup_{\ell \in [t]}Z_{\ell, c} \cup \bigcup_{\substack{\ell, j \in [t], \ell < j}}Y_{\ell, c, j}\right)\right\}$ \item $\{c: |Z_{i, c}| < r\}$
        \item $\{c: \exists j > i \text{ such that } {|Z_{j,c}| = r \text{ and } }(N(v) \cap X_j)\setminus N(Z_{j, c})\not\subseteq (N(Y_{i, c, j})\cap X_j) \setminus N(Z_{j, c})\}$.
    \end{enumerate} 
        \end{enumerate} 
    \end{enumerate}
\end{definition}

These conditions are used to forbid colours at $v$, justified as follows. 
\begin{enumerate}[1.]
    \item We assume that $S$ is coloured by $f$ in our output component, so we forbid $f(v_j)$ for every neighbour $v_j$ of $v$ in $S$.
    \item Second, our plan is to arrange that $N(A_c)$ contains all vertices in $M \setminus W$ that have a neighbour of colour $c$ in $C$ (in other words, $N(v) \cap (M \setminus W) \subseteq N(A_c)$ for all $v \in C$ with $f'(v) = c$). Therefore, vertices in $C$ with a neighbour in $(M \setminus W) \setminus N(A_c)$ will not be coloured $c$. 
    \item For each colour $c \in [k]$, we assume vertices in the sets $A_c, B_c$, $Z_{\ell,c}$ and $Y_{\ell, c, j}$ for $\ell, j \in [t]$ with $\ell < j$ are coloured $c$ in our $k$-colouring of $C$, and so colour $c$ is forbidden from neighbours of these sets. 
    \item For each $c \in [k]$ and $i \in [t]$ with $|Z_{i, c}| < r$, no vertices in  $X_i \setminus Z_{i,c}$ are coloured $c$ (condition (Small $Z$) of Definition \ref{def:canvas}). Since we assumed $v$ is not in $Z_{i,c}$, we forbid colour $c$ for $v$. 
    \item Our plan is to arrange that for all $i < j$ and all $c \in [k]$ such that $|Z_{j,c}| = r$, the set $N(Y_{i, c, j})$ contains all vertices in $X_j \setminus N(Z_{j,c})$ with a neighbour of colour $c$ in $X_i \cap V(C)$ (in other words, $N(v) \cap (X_j \setminus N(Z_{j,c})) \subseteq N(Y_{i, c, j})$ for all $v \in V(C) \cap X_i$ such that $f'(v) = c$). Therefore, if a vertex $v$ in $X_i \cap C$ has a neighbour in $(X_j \setminus N(Z_{j,c})) \setminus N(Y_{i,c,j})$, then $v$ should not receive colour $c$. 
\end{enumerate}

In Algorithm \ref{refining_algorithm}, we follow the strictures of Definition \ref{def:associated} to obtain a maximum-weight associated graph $C$ from $Q$. We prove in Lemma \ref{lemma:alg_terminates} that Algorithm \ref{refining_algorithm} terminates in polynomial time, predicated on the inductive hypothesis that WM$(k-1)$CIS is solvable in polynomial time. The correctness of Algorithm \ref{refining_algorithm} is proven in Lemmas \ref{lem:solqvalid} and \ref{lem:c'asgoodasc}. Given a graph $H \in \mathcal{G}_r$, a list assignment $L$ for $H$ with $\left|\bigcup_{v \in V(H)} L(v) \right| \leq k-1$, and a weight function $\omega: V(H) \rightarrow \mathbb{R}_{+}$, let $U(H,L, \omega)$ be an optimum solution to the WM$(k-1)$CIS problem on $H$. 

\begin{algorithm}
\caption{Component Generation Algorithm}\label{refining_algorithm} 
\begin{algorithmic} [1]
\Require $H \in \mathcal{G}_r$ and $Q = (S,f,\{A_c\}_{c \in [k]}, \{B_c\}_{c \in [k]}, \{Y_{i,c,\ell}\}_{\substack{i,\ell \in [|S|],   i < \ell, c \in [k]}}, \{Z_{i,c}\}_{\substack{i \in [|S|], c \in [k]}}) $ a canvas of $H$ with $S = \{v_1, \dots, v_t\}$ for some $t \geq 1$, a list assignment $L$ with $\bigcup_{v \in V(H)} L(v) = [k]$, and a weight function $\omega: V(H) \rightarrow \mathbb{R}_{+}$. 
\Ensure A connected induced $k$-colourable subgraph $C$ of $H$.
\ForAll{$v \in N(S)$} \label{line:2}
\State $i:= p \in [t]$ such that $  v \in X_p$ \label{line:3}
\If{there exists $c \in [k]$ such that $v \in A_c \cup \left(\bigcup_{j \in [t], j > i}Y_{i, c, j} \right)\cup \left(\bigcup_{i \in [t]} Z_{i, c}\right),$} \label{line:4} 
    \State $F(v) := \{\overline{c} \in [k] :  \overline{c} \neq c\}$ \label{line:5}
    \Else
    \State
        $F(v) :=  \{f(v_j) : j \in[t] \text{ and } vv_j \in E(H) \} \text{ }\cup $ \label{line:7}\\ $ \hspace{25.5mm} \{c: N(v) \cap (M \setminus W) \not \subseteq N(A_c)\} \text{ }\cup $\label{line:8} \\ $\hspace{25.5mm} \{c: v \in N\left(A_c \cup B_c \cup \bigcup_{\ell \in [t]}Z_{\ell, c} \cup \bigcup_{\substack{\ell, j \in [t], \ell < j}}Y_{\ell, c, j}\right)\} \text{ }\cup $ \label{line:9}\\ $\hspace{25.5mm} \{c: |Z_{i, c}| < r\}\text{ }\cup $ \label{line:10}\\ $\hspace{25.5mm} \{c: \exists j > i \text{ s.t.\ } |Z_{j,c}| = r \text{ and } (N(v) \cap X_j)\setminus N(Z_{j,c})\not\subseteq (N(Y_{i, c, j})\cap X_j) \setminus N(Z_{j, c})\}$\label{line:11} 
\EndIf\label{line:12}
\State $L(v) \leftarrow L(v) \setminus F(v)$ \label{line:13}
\EndFor

\State \Return $H\left[S \cup \bigcup_{i \in [t]} V\left(U\left(X_i \setminus {N\left(\bigcup_{c \in [k]} B_c\right)}, L, \omega \right)\right)\right]$ \label{line:15}

\end{algorithmic}
\end{algorithm}

We outline the algorithm briefly, and then provide a line-by-line explanation. In the body of the algorithm, we modify the list assignment $L$ by removing colours from a vertex's list that would contradict our known restrictions about the structure of the output component. The output of the algorithm in line 14 is the graph induced by $S$ together with a solution to the WM$(k-1)$CIS problem on (a subset of) each $X_i$ with the given restricted list colouring. Note this latter solution is found by induction, using that $f(v_i)$ is removed from the lists of all vertices in $X_i$.

We now give a line-by-line explanation of the algorithm. Given a graph $H$, canvas $Q$, and list-assignment $L$, we consider vertices and colours that could possibly be in an output component $C$ associated with $Q$. From Definition \ref{def:associated}, we know that $S \subseteq V(C)$ and $V(C) \setminus S \subseteq N(S)$, so we only consider vertices in $N(S)$ (line \ref{line:2}). 

For each vertex $v$, we identify the unique index $i$ such that $v \in X_i$ (line \ref{line:3}). In lines \ref{line:4}-\ref{line:12}, we create sets of forbidden colours for $v$, which allows us to verify, among other properties, that this union does indeed produce an $L$-colourable induced subgraph of $G$. 

If there is a colour $c$ such that $v$ is contained in $A_c \cup \left(\bigcup_{i, j \in [t], i< j}Y_{i, c, j}\right) \cup \left(\bigcup_{i \in [t]}Z_{i, c}\right)$, then we assume $v$ is coloured $c$ in our output component and forbid all other colours. (Note that by condition (Disjoint) in Definition \ref{def:canvas}, $v$ is in this union of sets for at most one $c\in [k]$.) Otherwise, we forbid several different sets of colours for $v$, in accordance with Definition \ref{def:associated} (lines \ref{line:7}-\ref{line:11}).

We update the list assignment to take into account which colours are forbidden for each vertex (line \ref{line:13}). Note that by line \ref{line:7}, in each set $X_i$, there are at most $k-1$ colours in $\bigcup_{v \in X_i}L(v)$ (with $L$ as updated in line 12). Note that we can relabel colours so that $\bigcup_{v \in X_i}L(v) \subseteq [k-1]$, so an optimum solution to the WM$(k-1)$CIS problem on an induced subgraph of $H[X_i]$ can be found in polynomial time by the inductive hypothesis.  {In line \ref{line:15}, we remove neighbours of sets $B_c$ from $X_i$; since vertices in $B_c$ are in another component of OPT, they are not adjacent to vertices in $C$.} Our solution is built from the subgraph induced by the set $S$ together with the union of the vertex sets of the solutions for these subsets of $X_i$. 

The following three lemmas establish that Algorithm \ref{refining_algorithm} terminates in polynomial time and produces a maximum-weight connected induced $k$-colourable subgraph associated with $Q$.

\begin{lemma}
\label{lemma:alg_terminates}
    If WM$(k-1)$CIS (with input list assignment $L$ such that $L(v) \subseteq [k-1]$ for every vertex $v$) is solvable in polynomial time, then Algorithm \ref{refining_algorithm} terminates in polynomial time with inputs $H$ and $Q$.
\end{lemma}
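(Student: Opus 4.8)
The plan is to bound the running time of the two phases of Algorithm~\ref{refining_algorithm} separately: the colour-forbidding loop in lines \ref{line:2}--\ref{line:13}, and the single recursive step that produces the output in line \ref{line:15}. Throughout we use that $k$, $r$, and hence $|S| = t \le (k+2)(r-1)+5$, are constants, and that the guessed sets $A_c, B_c, Y_{i,c,\ell}, Z_{i,c}$ forming the canvas $Q$ are part of the input.

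First I would handle the loop. Since $t$ is a constant, the partition $X_1, \dots, X_t$ of $N_H(S)$ can be computed in polynomial time directly from its definition (iterated neighbourhoods and set differences), so line \ref{line:3} becomes a constant-time lookup once this partition is in hand. The test in line \ref{line:4}, and each of the five colour sets appearing in lines \ref{line:7}--\ref{line:11}, only involve: membership of $v$ in one of the constantly-many guessed sets; the fixed colouring $f$ of the constant-size set $S$; the sets $M(Q)$ and $W(Q)$ of Definition~\ref{defs:QSoperators}, which are computable in polynomial time; and neighbourhood, subset, and intersection computations over $V(H)$ and the constant-size colour set $[k]$. Hence $F(v)$ and the update in line \ref{line:13} cost polynomial time per vertex, and as the loop runs at most $v(H)$ times, the whole loop is polynomial.

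Next I would turn to line \ref{line:15}, where the key point is that the hypothesis of the lemma can legitimately be invoked for each of the $t$ recursive calls. Fix $i \in [t]$ and consider the list assignment $L$ after the loop, restricted to $X_i \setminus N_H(\bigcup_{c\in[k]} B_c)$. Every $v \in X_i$ satisfies $vv_i \in E(H)$, since $X_i \subseteq N_H(v_i)$. If $v$ was treated in lines \ref{line:4}--\ref{line:5}, then $|L(v)| \le 1$, and the surviving colour $c$ satisfies $c \ne f(v_i)$ by condition (Colour) of Definition~\ref{def:canvas} (else $v_i$ and $v$ would be adjacent members of the colour-$c$ independent set). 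Otherwise $f(v_i)$ is forbidden at $v$ by line \ref{line:7}, so $L(v) \subseteq [k] \setminus \{f(v_i)\}$. Either way $\bigcup_{v \in X_i} L(v) \subseteq [k] \setminus \{f(v_i)\}$, a set of size $k-1$; relabelling its elements as $[k-1]$ turns the restriction of $L$ to $X_i$ into a list assignment with lists contained in $[k-1]$. Since $H[X_i \setminus N_H(\bigcup_c B_c)]$ is an induced subgraph of $H \in \mathcal{G}_r$ and hence lies in $\mathcal{G}_r$, the assumed polynomial-time algorithm for WM$(k-1)$CIS applies and computes $U(X_i \setminus N(\bigcup_c B_c), L)$ in polynomial time. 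There are only $t$ such calls (a constant), and forming $S \cup \bigcup_i V(U(\cdot))$ and inducing the corresponding subgraph of $H$ is polynomial, so line \ref{line:15} runs in polynomial time. Adding the two phases gives the claim.

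The main obstacle I anticipate is precisely the verification in the previous paragraph that, after the loop, each $X_i$ carries lists of size at most $k-1$ — equivalently, that the colour $f(v_i)$, or a singleton consistent with it, has genuinely been removed from every vertex of $X_i$, so that the recursive call is a bona fide instance of WM$(k-1)$CIS. Everything else — bounding the loop length by $v(H)$, and checking that each quantity referenced in lines \ref{line:4}--\ref{line:11} is either constant-size canvas data or a polynomial-time graph computation — is routine.
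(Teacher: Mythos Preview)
Your proposal is correct and follows the same two-phase structure as the paper's proof: bound the loop in lines \ref{line:2}--\ref{line:13} by a polynomial, then invoke the WM$(k-1)$CIS oracle for line \ref{line:15}. You are in fact more careful than the paper on the point you flagged as the main obstacle---the paper relegates the verification that $\bigcup_{v \in X_i} L(v) \subseteq [k] \setminus \{f(v_i)\}$ to the surrounding prose (appealing only to line \ref{line:7}), whereas you also handle the line \ref{line:4}--\ref{line:5} branch explicitly via condition (Colour).
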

\begin{proof}
    The computations in lines \ref{line:2}-\ref{line:13} are immediately polynomial in terms of $v(H)$, since each line is in $O(v(H))$. We assume that WM$(k-1)$CIS is solvable in $\textnormal{poly}(v(H))$-time. Thus the graphs $U\left(X_i \setminus {N\left(\bigcup_{c \in [k]} B_c\right)}, L,\omega \right)$ in line \ref{line:15} are generated in $\textnormal{poly}(v(H))$-time, so Algorithm \ref{refining_algorithm} terminates in $\textnormal{poly}(v(H))$-time. 
\end{proof}

We show now that the graphs generated by Algorithm \ref{refining_algorithm} are connected induced $L$-colourable subgraphs.

\begin{lemma}\label{lem:solqvalid}
     If $C$ is generated from Algorithm \ref{refining_algorithm} with inputs $H, Q,$ and $L'$, then $C$ is a connected induced $L'$-colourable induced subgraph of $H$.
\end{lemma}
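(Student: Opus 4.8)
The plan is to verify the three claimed properties of $C$ separately: that $C$ is an induced subgraph of $H$, that $C$ is connected, and that $C$ is $L'$-colourable.

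\textbf{Induced subgraph.} This is immediate from the form of the output in line~\ref{line:15}: $C = H[S \cup \bigcup_i V(U(X_i \setminus N(\bigcup_c B_c), L))]$, so $C$ is by definition an induced subgraph of $H$. One should just note that each $U(\cdot,\cdot)$ is a subgraph of $H$ on a vertex subset, so the union of vertex sets is a well-defined subset of $V(H)$, and the induced subgraph on it is $C$.

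\textbf{Connectivity.} Here I would argue that every vertex of $C$ outside $S$ has a neighbour in $S$, and that $G[S]$ is connected (property (Set~$S$) of Definition~\ref{def:canvas}); hence $C$ is connected. The point is that every vertex placed into $C$ in line~\ref{line:15} lies in some $X_i \subseteq N_H(v_i)$, so it is adjacent to $v_i \in S$. Combined with connectivity of $H[S]$, any vertex of $C$ has a path to $v_1$ through $S$, so $C$ is connected. This step is essentially bookkeeping with the definition of $X_i$.

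\textbf{$L'$-colourability.} This is the main obstacle. I need to exhibit an $L'$-colouring of $C$. The natural candidate: colour $S$ by $f$ (which is an $L$-colouring of $G[S]$ by (Col.~$f$), and $L \subseteq L'$ on $S$ since the algorithm only removes colours from vertices in $N(S)$, which is disjoint from $S$ as the $X_i$ are), and colour each $X_i \cap V(C)$ by an optimal WM$(k-1)$CIS colouring of $U(X_i \setminus N(\bigcup_c B_c), L)$ guaranteed to exist — here using that after line~\ref{line:13}, by line~\ref{line:7} the colour $f(v_i)$ has been removed from $L(v)$ for every $v \in X_i$ (since $v \in X_i \subseteq N_H(v_i)$ means $vv_i \in E(H)$), so $\bigcup_{v \in X_i} L(v)$ uses at most $k-1$ colours and $U$ colours all of $X_i \setminus N(\bigcup_c B_c)$ within its list. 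The delicate part is showing the resulting global colouring is proper, i.e. has no monochromatic edge. Edges within $S$ are fine by $f$. Edges within a single $X_i$ are fine by the properness of the $U$-colouring. The cases to handle are: edges between $S$ and some $X_i$ — ruled out because $f(v_i)$ was removed from the list of every vertex of $X_i$ adjacent to $v_i$... wait, more carefully, a vertex $v \in X_i$ may be adjacent to $v_j \in S$ for $j \neq i$; this is exactly why line~\ref{line:7} forbids $f(v_j)$ for \emph{every} neighbour $v_j$ of $v$ in $S$, so $v$'s colour in $C$ avoids $f(v_j)$. Edges between $X_i$ and $X_j$ for $i < j$ — this is the genuinely hard case, since the two sub-solutions are computed independently. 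I would need to show that for $v \in X_i$, $v' \in X_j$ with $vv' \in E(H)$ and both in $C$, they cannot receive the same colour $c$. I expect this to follow from the colour-forbidding rules in lines~\ref{line:9}--\ref{line:11}, but making it rigorous may well require the structural guarantees that Lemma~\ref{lem:solwithallinC} is stated to provide, or at least a careful case analysis on whether $v$ (resp.\ $v'$) lies in one of the guessed sets $A_c, Z_{\cdot,c}, Y_{\cdot,c,\cdot}$; if $v$ is in a guessed set it has a forced colour and the (Colour)/(Disjoint) conditions plus the line~\ref{line:9} deletions handle it, whereas if neither is in a guessed set then properness of a single $U$-solution alone does not obviously suffice, and one must invoke line~\ref{line:11} (together with $|Z_{j,c}|$ being $r$ or $<r$, splitting into the line~\ref{line:10} and line~\ref{line:11} subcases) to derive a contradiction. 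I would present the argument as a case analysis on the colour class membership of $v$ and $v'$ in the various guessed sets, and flag that the cross-set case is where all the canvas conditions earn their keep.
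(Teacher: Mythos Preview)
Your approach is essentially the paper's: induced is immediate from line~\ref{line:15}; connectedness follows because $H[S]$ is connected and each $X_i \subseteq N(v_i)$; and $L'$-colourability comes from combining $f$ on $S$ with the $U$-colourings on the pieces, checking the edge types you list. Two remarks. First, drop the hedge about Lemma~\ref{lem:solwithallinC}: the proof is entirely self-contained, and your own case analysis at the end is exactly what is needed. The paper packages the $X_i$--$X_j$ case more cleanly by proving that the \emph{updated lists} $L(u)$ and $L(v)$ are disjoint for adjacent $u\in X_i$, $v\in X_j$ with $i<j$: if $c\in L(u)$ then line~\ref{line:11} (or the guessed-set branch together with line~\ref{line:9} and condition (Colour)) forces either $|Z_{j,c}|<r$, whence line~\ref{line:10} removes $c$ from $L(v)$, or $v\in N(Z_{j,c})\cup N(Y_{i,c,j})$, whence line~\ref{line:9} removes $c$ from $L(v)$. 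Second, for the $S$--$X_i$ edges you only covered the ``else'' branch; if $v\in X_i$ lies in a guessed set for colour $c$, line~\ref{line:7} is not applied, and one needs condition (Colour) of Definition~\ref{def:canvas} to conclude $c\neq f(v_j)$ for each neighbour $v_j\in S$.
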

\begin{proof}
    Let $C$ be generated from Algorithm \ref{refining_algorithm} with inputs $H, Q,$ and $L'$. 
    Recall that for $i \in [t]$, the graph $U\left(X_i \setminus {N\left(\bigcup_{c \in [k]} B_c\right)}, L,\omega \right)$ is a solution to the WM$(k-1)$CIS problem on the graph $H\left[X_i {\setminus N\left(\bigcup_{c \in [k]} B_c\right)}\right]$ with respect to the list assignment $L$ defined in line \ref{line:13} of the algorithm. 
    Note that $$C = H\left[S \cup \bigcup_{i \in [t]} V\left(U\left(X_i{\setminus N\left(\bigcup_{c \in [k]} B_c\right)},L,\omega\right)\right)\right],$$ so $C$ is an induced subgraph of $H$. Additionally, since the sets $X_i$ partition $N(S)$, and since $S$ is connected by the definition of a canvas, we know the graph $C$ is connected. Since $L(v) \subseteq L'(v)$ for each vertex $v$, we also have that each graph $U\left(X_i{\setminus N\left(\bigcup_{c \in [k]} B_c\right)}, L,\omega\right)$ is $L'$-colourable by definition. Moreover, we claim that adjacent vertices in distinct subgraphs $U\left(X_i{\setminus N\left(\bigcup_{c \in [k]} B_c\right)}, L,\omega\right)$ and $U\left(X_j{\setminus N\left(\bigcup_{c \in [k]} B_c\right)}, L,\omega\right)$ always receive different colours. To see this, let $uv \in E(H)$ such that $u \in V\left(U\left(X_i{\setminus N\left(\bigcup_{c \in [k]} B_c\right)}, L,\omega\right)\right)$ and $v \in V\left(U\left(X_j{\setminus N\left(\bigcup_{c \in [k]} B_c\right)}, L,\omega\right)\right)$. Let $L(u)$ and $L(v)$ be the lists of $u$ and $v$ after the list assignment $L$ is updated in line \ref{line:13} of Algorithm \ref{refining_algorithm}. We show that $L(u)$ and $L(v)$ are disjoint. To that end, let $c \in L(u)$ {and suppose without loss of generality that $i < j$.} This implies either $|Z_{j,c}| < r$ or $(N(u) \cap X_j) \setminus N(Z_{j, c}) \subseteq N(Y_{i, c, j}) \setminus N(Z_{j, c})$ by line \ref{line:11}. If $|Z_{j,c}| < r$, then by line \ref{line:10}, $c \notin L(v)$. Otherwise, since $v \in N(u) \cap X_j$, we have that either $v \in N(Z_{j,c})$ or $v \in N(Y_{i, c, j})$, and so colour $c \notin L(v)$ by line \ref{line:9}. The lists are disjoint, as desired, and so the $L'$-colouring is proper.
\end{proof}

Finally, we show that the algorithm outputs a subgraph of maximum weight with respect to all possible subgraphs associated with a canvas $Q$. 

\begin{lemma}\label{lem:c'asgoodasc}
    If $C$ is generated from Algorithm \ref{refining_algorithm} with inputs $H, Q$ and $L'$, then $C$ is a maximum-weight induced subgraph associated with $Q$. 
\end{lemma}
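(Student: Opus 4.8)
The plan is to show that any induced subgraph $C'$ associated with $Q$ is ``no better'' than the output $C$ of Algorithm \ref{refining_algorithm}, by decomposing $C'$ along the partition of $N(S)$ into the sets $X_i$ and comparing each piece against the corresponding call to the WM$(k-1)$CIS oracle. First I would fix an arbitrary induced subgraph $C'$ associated with $Q$, together with a witnessing $L$-colouring $f'$ of $C'$ with $f'|_S = f$ as in Definition \ref{def:associated}. Since $S$ dominates $C'$ and $S \subseteq V(C')$, every vertex of $V(C') \setminus S$ lies in exactly one $X_i$; set $C'_i := C'[X_i \cap V(C')]$. The key claim is that for each $i \in [t]$, the set $V(C'_i)$ is a feasible solution to the WM$(k-1)$CIS instance on $H\bigl[X_i \setminus N(\bigcup_{c} B_c)\bigr]$ with the list assignment $L$ produced in line \ref{line:13}; granting this, $\omega(C'_i) \le \omega\bigl(U(X_i \setminus N(\bigcup_c B_c), L)\bigr)$ by optimality of $U$, and summing over $i$ together with the common term $\omega(S)$ gives $\omega(C') \le \omega(C)$.

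To prove that claim I would verify three things. (a) $V(C'_i)$ avoids $N(\bigcup_c B_c)$: this is exactly the first condition in Definition \ref{def:associated}, that each $v \in V(C') \setminus S$ is anticomplete to $\bigcup_c B_c$. (b) $f'$ restricted to $V(C'_i)$ is a proper colouring using at most $k-1$ colours that respects the updated lists $L$. Properness is inherited from $f'$ being a colouring of $C'$; the bound of $k-1$ colours follows because $v_i$ is adjacent to every vertex of $X_i$ and $f(v_i) \in F(v)$ for all such $v$ by line \ref{line:7}, so $f(v_i)$ is removed from every list in $X_i$. The list-respecting part is the heart of the matter: for each $v \in V(C'_i)$, I must check $f'(v) \in L(v)$ after the deletions in lines \ref{line:5}--\ref{line:11}. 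If $v$ lies in some $A_c \cup \bigcup_{j>i} Y_{i,c,j} \cup \bigcup_\ell Z_{\ell,c}$, then Definition \ref{def:associated} forces $f'(v) = c$, matching line \ref{line:5}. Otherwise Definition \ref{def:associated} says $f'(v)$ avoids precisely the five sets whose union is removed in lines \ref{line:7}--\ref{line:11}, so $f'(v)$ survives into the updated $L(v)$. (c) Finally, $V(C'_i) \subseteq X_i \cap N(S) \subseteq V(H)$, and it is an induced subgraph of $H[X_i \setminus N(\bigcup_c B_c)]$ because $C'$ is induced in $H$. This establishes feasibility.

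The last point to nail down is that the objective values genuinely add up: $\omega(C') = \omega(S) + \sum_{i \in [t]} \omega(C'_i)$ and $\omega(C) = \omega(S) + \sum_{i \in [t]} \omega\bigl(U(X_i \setminus N(\bigcup_c B_c), L)\bigr)$, using that $\{X_i\}_{i\in[t]}$ partitions $N(S)$ and $V(C') \setminus S \subseteq N(S)$ (and likewise for $C$ by line \ref{line:15}). Combining with the per-$i$ inequality from optimality of $U$ yields $\omega(C') \le \omega(C)$; since $C'$ was an arbitrary subgraph associated with $Q$, and $C$ is itself associated with $Q$ (shown in Lemma \ref{lem:solqvalid}, or to be checked directly), $C$ has maximum weight among subgraphs associated with $Q$.

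The main obstacle I anticipate is step (b): carefully matching each of the five colour-deletion rules in lines \ref{line:7}--\ref{line:11} of Algorithm \ref{refining_algorithm} with the corresponding five forbidden-colour sets in Definition \ref{def:associated}, and in particular confirming that the ``$v \in A_c \cup \cdots$'' branch in line \ref{line:4} is triggered for exactly the same vertices for which Definition \ref{def:associated} dictates a forced colour. This is essentially bookkeeping, but it requires that the two lists be literally the same, so it must be done with care; condition (Disjoint) of Definition \ref{def:canvas} is what guarantees the forced colour is unique and hence that line \ref{line:5} is well defined.
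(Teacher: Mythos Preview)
Your proposal is correct and follows essentially the same approach as the paper: decompose any $C'$ associated with $Q$ along the partition $\{X_i\}$, verify that each piece is feasible for the corresponding WM$(k-1)$CIS subproblem by matching the forbidden-colour rules in lines \ref{line:5}--\ref{line:11} against the conditions in Definition \ref{def:associated}, and then sum up using the optimality of $U$. One minor note: the paper establishes that $C$ itself is associated with $Q$ at the start of this lemma's proof rather than in Lemma \ref{lem:solqvalid}, so your parenthetical ``or to be checked directly'' is the right instinct.
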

\begin{proof}
Let $C$ be generated from Algorithm \ref{refining_algorithm} with inputs $H, Q$ and $L'$. Let $L$ be the list assignment produced in line \ref{line:13}, and let $f'$ be a colouring of $C$ with the properties that $f'$ and $f$ agree on $S$ and $f'(v) \in L(v)$ for all $v \in V(C)$. By construction, this colouring exists, $S$ dominates $C$ and $S \subseteq V(C)$, and $f'$ meets the colouring restrictions of Definition \ref{def:associated}. Hence $C$ is associated with $Q$. Let $C'$ be another subgraph associated with $Q$, with $L'$-colouring $f''$, say. We can write $V(C') = S \cup \bigcup_{i \in [t]} \left(V(C') \cap \left(X_i{\setminus N\left(\bigcup_{c \in [k]} B_c\right)}\right)\right)$, since the vertices of any graph associated with $Q$ are drawn from $N\langle S \rangle$ and avoid the neighbour sets of $B_c$ for all $c \in [k]$. The list assignment $L$ generated by Algorithm \ref{refining_algorithm} matches the restrictions of Definition \ref{def:associated}, and hence $f'$ is an $L$-colouring of $C'$. Since $U\left(X_i{\setminus N\left(\bigcup_{c \in [k]} B_c\right)},L,\omega\right)$ is a maximum-weight $L$-colourable induced subgraph of $X_i$, it follows that $\omega\left(U\left(X_i{\setminus N\left(\bigcup_{c \in [k]} B_c\right)},L,\omega\right)\right) \ge \omega(C'[X_i])$. Hence $$\omega(C) = \omega(S) + \sum_{i \in [t]} \omega\left(U\left(X_i{\setminus N\left(\bigcup_{c \in [k]} B_c\right)},L,\omega\right)\right) \ge \omega(S) + \sum_{i \in [t]} \omega(C'[X_i]) = \omega(C'),$$ so we conclude that $C$ is maximum.
\end{proof}

Fix a list-$k$-assignment $L$ of $G$. Let $\mathcal{C}$ be the set of outputs of Algorithm \ref{refining_algorithm} evaluated on $G$ and each canvas $Q \in \mathfrak{Q}_L$. By Lemmas \ref{lem:solqvalid} and \ref{lem:c'asgoodasc}, each graph $C \in \mathcal{C}$ is \cece{a} maximum-weight connected induced $L$-colourable subgraphs associated to its input canvas. We now show that {we} can construct a solution to the WML$k$CIS problem using only the components of $\mathcal{C}$. 

\begin{lemma}\label{lem:solwithallinC}
    There exists an optimum solution $OPT$ for the WML$k$CIS problem with input graph $G$ and list assignment $L$ such that every component $C$ of $OPT$ is in $\mathcal{C}$. 
\end{lemma}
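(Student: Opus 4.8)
The plan is to take an optimum solution $OPT$ to WM$k$CIS on $(G,L)$ chosen so that the number of its components not lying in $\mathcal C$ is as small as possible, and to derive a contradiction from the assumption that some component $D$ of $OPT$ satisfies $D\notin\mathcal C$. To do this I would build a canvas $Q$ for which $D$ is associated (in the sense of Definition \ref{def:associated}); then the output $C$ of Algorithm \ref{refining_algorithm} on $Q$ lies in $\mathcal C$ by construction, is a connected induced $L$-colourable subgraph by Lemma \ref{lem:solqvalid}, and has $\omega(C)\ge\omega(D)$ since $C$ is a maximum-weight subgraph associated with $Q$ (Lemma \ref{lem:c'asgoodasc}) and $D$ is one such subgraph. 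I would then show $OPT'=G[V(C)\cup(V(OPT)\setminus V(D))]$ is again an optimum solution whose components are exactly $C$ together with the components of $OPT$ other than $D$; since $C\in\mathcal C$ and $D\notin\mathcal C$, this contradicts the choice of $OPT$ and shows every component of $OPT$ lies in $\mathcal C$.

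\textbf{Building the canvas from $D$.} Fix an $L$-colouring $f'$ of $OPT$. Using Lemma \ref{lem:smallconnecdomset}, let $S$ be an (arbitrarily ordered) vertex set of a connected dominating subgraph of $D$ with $|S|\le(k+2)(r-1)+5$, and set $f=f'|_S$; this determines the $X_i$ and hence $M$. Since $S\subseteq V(D)$ is anticomplete to $M$, every component of $OPT$ other than $D$ lies in $M$ — a fact I would use repeatedly. For each colour $c$ and index $i$, let $Z_{i,c}$ be all the $f'$-colour-$c$ vertices of $D\cap X_i$ if there are fewer than $r$ of them and an arbitrary $r$-subset otherwise, and define $B_c$ the same way from the colour-$c$ vertices of $OPT-D$; this fixes $W$. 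Finally, writing $A'_c$ for the set of colour-$c$ vertices of $D-S$ and $Y'_{i,c}$ for the colour-$c$ vertices of $D\cap X_i$, choose $A_c\subseteq A'_c$ with $N(A_c)\cap(M\setminus W)=N(A'_c)\cap(M\setminus W)$ and $Y_{i,c,\ell}\subseteq Y'_{i,c}$ with $(N(Y_{i,c,\ell})\cap X_\ell)\setminus N(Z_{\ell,c})=(N(Y'_{i,c})\cap X_\ell)\setminus N(Z_{\ell,c})$. Because $f'$ is a proper $L$-colouring and $D$ is anticomplete to $OPT-D$, the axioms (Col.\ $f$), (Sets $B$), (Sets $Z$), (Small $Z$), (Disjoint), (Colour), and (Comps.) hold by inspection, and (Set $S$) holds by Lemma \ref{lem:smallconnecdomset}; axioms (Sets $A$) and (Sets $Y$) hold \emph{provided} such $A_c$ and $Y_{i,c,\ell}$ can be found with $|A_c|\le 2k$ and $|Y_{i,c,\ell}|\le 2$, which is the crux and is discussed last.

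\textbf{Association and the swap.} Verifying that $D$ is associated with $Q$ via $f'$ is a check against Definition \ref{def:associated}. For $v\in V(D)\setminus S$ with $f'(v)=c_0$: the forbidden set of colours of $S$-neighbours of $v$, and the set $\{c:v\in N(A_c\cup B_c\cup\bigcup_\ell Z_{\ell,c}\cup\bigcup_{\ell<j} Y_{\ell,c,j})\}$, are avoided because $f'$ is proper on $D$ and $B_{c_0}$ is anticomplete to $D$; the set $\{c:|Z_{i,c}|<r\}$ is avoided because $v\in Y'_{i,c_0}$, so if $v$ lies in none of the guessed sets then $|Y'_{i,c_0}|\ge r$ and hence $|Z_{i,c_0}|=r$; and the two remaining forbidden sets are avoided because $N(v)\subseteq N(A'_{c_0})$ and $N(v)\subseteq N(Y'_{i,c_0})$, together with the two defining identities for $A_{c_0}$ and $Y_{i,c_0,j}$. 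For the swap I would use that the algorithm's output gives $V(C)\subseteq S\cup N(S)$, so $V(C)$ is disjoint from $V(OPT-D)\subseteq M$; and $C$ is anticomplete to $OPT-D$, because a vertex of $C$ in $S$ is anticomplete to $M$, while if $v\in V(C)\cap X_i$ had a neighbour $u\in V(OPT-D)$ then $u\in M$ and, since $v$ avoids $N(\bigcup_c B_c)$ by line \ref{line:15}, $u\notin\bigcup_c B_c$; hence the colour $c$ of $u$ has $|B_c|=r$ while $u$ is nonadjacent to $B_c$, so $u\in M\setminus W$, and yet $u$ is nonadjacent to every $A_{c'}\subseteq V(D)$, so line \ref{line:8} forbids all colours at $v$, contradicting $v\in V(C)$. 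Thus $C$ is a full component of $OPT'$, $\omega(OPT')=\omega(C)+\omega(OPT-D)\ge\omega(OPT)$, $OPT'$ is optimum, and it has one fewer component outside $\mathcal C$ — the desired contradiction.

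\textbf{The main obstacle.} The genuinely substantive step — the one that uses $(P_5+rK_1)$-freeness — is showing $A_c$ can be taken with $|A_c|\le 2k$ and $Y_{i,c,\ell}$ with $|Y_{i,c,\ell}|\le 2$, and I expect this to be the bulk of the proof. For $A_c$, take an inclusion-minimal subset of $A'_c$ with the required identity; then its elements form an independent set, each chosen $a$ has a private witness $t_a\in M\setminus W$ adjacent to $a$ and to no other chosen element, and each $a$ has a neighbour in the connected set $S$, which is anticomplete to all the $t_a$. If there were more than $2k$ chosen elements, some colour class $B_{c'}$ with $|B_{c'}|=r$ would be nonadjacent to at least three of the witnesses; if two of these, say $t_1,t_2$, are nonadjacent, then a shortest path from $t_1$ to $t_2$ through $a_1$, $S$, and $a_2$ is induced with at least five vertices, and $B_{c'}$ is an independent $r$-set anticomplete to its first five vertices — an induced $P_5+rK_1$, contradicting $G\in\mathcal G_r$. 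The remaining case, in which such witnesses form a clique, forces the corresponding $a_i$ to share a neighbourhood in $S$ and should be ruled out by a further short case analysis using $|S|\le(k+2)(r-1)+5$ and the $k$-colourability of $D$. The bound $2$ for $Y_{i,c,\ell}$ is obtained by the same technique carried out entirely within $N(S)$, using that $|Z_{\ell,c}|=r$ whenever the identity constraining $Y_{i,c,\ell}$ is nontrivial. Making this case analysis precise is the hardest part.
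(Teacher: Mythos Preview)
Your proposal follows the paper's proof essentially verbatim: choose $OPT$ minimising the number of components outside $\mathcal C$, build a canvas $Q$ from a bad component $D$ using the actual colouring of $OPT$, verify that $D$ is associated with $Q$, and swap $D$ for the algorithm's output; the anticompleteness argument in your third paragraph is exactly the paper's Claim~\ref{claim:c'isacompt}.

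The one place your sketch goes astray is the clique case in the bound $|A_c|\le 2k$. You speculate that it ``forces the corresponding $a_i$ to share a neighbourhood in $S$'' and plan to invoke the bound on $|S|$ and the $k$-colourability of $D$; neither of these is true or relevant. The fix is much simpler and uses the same idea as the non-adjacent case: if the three private witnesses $t_1,t_2,t_3$ form a triangle, use one of its edges (say $t_2t_3$) to extend the path at one end rather than capping both ends with witnesses, obtaining the induced path $a_1,P,a_2,t_2,t_3$ (with $P$ a shortest $S$-path between neighbours of $a_1$ and $a_2$), which is still anticomplete to the independent $r$-set $B_{c'}$. The bound $|Y_{i,c,\ell}|\le 2$ is proved by the same two-case argument, with two simplifications you should record: the independent $r$-set is $Z_{\ell,c}$ rather than some $B_{c'}$ (so no pigeonhole over colours is needed, which is why the bound is $2$ rather than $2k$), and since $Y_{i,c,\ell}\subseteq X_i$, all of its vertices share the common neighbour $v_i$, which replaces the path $P$ through $S$ and yields the $P_5$'s $n_\ell(y_1),y_1,v_i,y_2,n_\ell(y_2)$ (non-adjacent case) and $n_\ell(y_3),n_\ell(y_1),y_1,v_i,y_2$ (clique case).
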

\begin{proof}
   We prove something slightly stronger: that there exists an optimum solution $OPT$ such that all its components are in $\mathcal{C}$, and, furthermore, each component satisfies two additional properties. Given a component $C$ of $OPT$ with respect to its input canvas $Q$, for $c \in [k]$, let $A'_c$ be the set of vertices in $C \setminus S$ coloured $c$. Likewise, for $i,\ell \in [t]$ with $i < \ell$ and $c \in [k]$, let $Y'_{i,c}$ be the set of vertices in $C \cap X_i$ coloured $c$. We will show that component $C$ satisfies
    $$N(A_c) \cap (M \setminus W) = N(A'_c) \cap (M \setminus W)$$ 
    for all $c \in [k]$, and 
    $$(N(Y_{i,c,\ell}) \cap X_\ell) \setminus N(Z_{\ell,c}) = (N(Y'_{i,c}) \cap X_\ell) \setminus N(Z_{\ell,c})$$ 
     for all $i,\ell \in [t]$ with $i < \ell$ and all $c \in [k]$.
    
    Let us say that $C$ is \emph{excellent} if $C \in \mathcal{C}$ and $C$ satisfies the two properties above. As described after Definition \ref{def:associated}, these conditions allow us to forbid colours for certain vertices in the algorithm (lines \ref{line:8}, \ref{line:11}), which in turn aids in our proofs of Lemma \ref{lem:solqvalid} and this lemma. Suppose for a contradiction that no optimum solution exists where all components are excellent. Instead, let $OPT$ be a solution to the WML$k$CIS problem for $G$ that is maximum with respect to the number of excellent components. Let $C$ be a component of $OPT$ that is not excellent. We will produce an excellent replacement component $C'$ for $C$ such that $(OPT \setminus C) \cup C'$ is an $L$-colourable induced subgraph of weight at least as high as that of $OPT$. 

    Let $f$ be an $L$-colouring of $OPT$. We define a canvas $$Q = \left(S,f|_{G[S]},\{A_c\}_{c \in [k]}, \{B_c\}_{c \in [k]}, \{Y_{i,c,\ell}\}_{\substack{i,\ell \in [|S|] \\  i < \ell \\ c \in [k]}}, \{Z_{i,c}\}_{\substack{i \in [|S|] \\ c \in [k]}}\right) $$ for $C$ as follows:
    \begin{enumerate}
    \item [($S$)] By Lemma \ref{lem:smallconnecdomset}, the component $C$ contains a connected dominating subgraph with at most  ${\max\{k, 3, (k+1)(r-1) + 5\}}$ vertices; let $S =\{v_1, v_2, \dots, v_t\}$ be the ordered vertex set of such a dominating subgraph (where the ordering is chosen arbitrarily). For all $1 \le i \le t$, let $X_i= N_{G}(v_i) \setminus \left(S \cup \bigcup_{1 \le j < i} X_j\right).$ 

    \item [($B$)] For each $c \in [k]$, let $B_c$ be a set of vertices coloured $c$ in $M = V(G) \setminus N\langle S \rangle$, chosen as follows. If $M$ contains at least $r$ vertices coloured $c$, then let $B_c$ be any set of size $r$ of these vertices. Otherwise, let $B_c$ be the set of all vertices in $M$ coloured $c$. 
    
    \item [($A$)] Let $W$ be as in Definition \ref{defs:QSoperators}, that is,  $W := \{v \in M :  N_{G}(v) \cap B_c \neq \emptyset \text{ for every } c\in [k] \text{ such that } |B_c| = r\} \setminus \bigcup_{c \in [k]} B_c$.  For each colour $c \in [k]$, let $A'_c$ be the set of vertices $v$ in $N(S) \cap V(C)$ that are coloured $c$. Let $A_c$ be a minimal subset of $A'_c$ with the property that $N(A_c) \cap (M \setminus W) = N(A'_c) \cap (M \setminus W)$. If for all $c' \in [k]$ we have $|B_{c'}| < r$, we have $M \setminus W = \emptyset$, and so $A_c = \emptyset$. 

    \item [($Z$)] For each $i \in [t]$, let $Z_{i,c}$ be a set of vertices coloured $c$ in $X_i$, chosen as follows. If $X_i$ contains at least $r$ vertices coloured $c$, then let $Z_{i,c}$ be any set of size $r$ of these vertices. Otherwise, let $Z_{i,c}$ be the set of all vertices in $X_i$ coloured $c$.

    \item [($Y$)] For each triple $i,c,\ell$ with $i, \ell \in [t]$, {$i < \ell$}, and $c \in [k]$, let $Y'_{i,c}$ be the set of vertices in $X_i$ that are coloured $c$. If $|Z_{\ell,c}| = r$, then let $Y_{i,c,\ell}$ be a minimal subset of $Y'_{i,c}$ with the property that $(N(Y_{i,c,\ell}) \cap X_\ell) \setminus N(Z_{\ell,c}) = (N(Y'_{i,c}) \cap X_\ell) \setminus N(Z_{\ell,c})$. If $|Z_{\ell,c}| < r$,  then let $Y_{i,c,\ell} = \emptyset$.  
    \end{enumerate}

    Since $f$ colours only the vertices of $OPT$, it follows that $A_c, Z_{i,c}, $ and $Y_{i,c,\ell}$ are contained in $C$ for all index choices, and $B_c$ is contained in $OPT \setminus C$.
    
    Next, we show that $Q$ is a canvas. For the most part, this follows directly from Definition \ref{def:canvas}.    It remains to show that for all $i,c,\ell$ with $i < \ell$, we have that $|A_c| \leq 2k$ and $|Y_{i,c,\ell}| \leq 2$. To that end, we prove the following three claims. The situation described in the proof of Claim \ref{claim:aissmall} is illustrated in Figure \ref{fig:acsmall}.

\begin{figure}[ht]
\tikzset{black/.style={shape=circle,draw=black,fill=black,inner sep=1pt, minimum size=4pt}}
\tikzset{white/.style={shape=circle,draw=black,fill=white,inner sep=1pt, minimum size=16pt}}
\tikzset{invisible/.style={shape=circle,draw=black,fill=black,inner sep=0pt, minimum size=0.1pt}}
\tikzset{decoration={snake,amplitude=.7mm,segment length=2mm,
                       post length=0mm,pre length=0mm}}
\begin{center}
\
\begin{tikzpicture}[scale=0.7]

\filldraw[color=black!100, fill=black!0, very thick] (-1,0) ellipse (1 and 3); 
\node[] at (-1.4,3.3) {$S$};
\node[] at (-1.4,0) {$P$};

\filldraw[color=black!100, fill=black!0, very thick] (1.5,0) ellipse (1 and 3); 
\node[] at (1.7,3.3) {$A'_c$};

\filldraw[color=black!100, fill=black!0, very thick] (1.5,0) ellipse (0.8 and 2); 
\node[] at (1.2,2.3) {$A_c$};

\filldraw[color=black!100, fill=black!0, very thick] (4,0) ellipse (1 and 2.4); 
\node[] at (4.1,2.8) {$X $};

\filldraw[color=black!100, fill=black!0, line width=2.5] (7,0) ellipse (1 and 2); 
\node[] at (7.4,2.2) {$B_j$};

        \node[white] (s1) at (-1,1) {$s_1$};
        \node[white] (s2) at (-1,-1) {$s_2$};
        \node[black] (u1) at (1.5,1) {};
        \node[] at (1.5,1.4) {$u_1$};
        \node[black] (u2) at (1.5,-1) {};
        \node[] at (1.5,-1.4) {$u_2$};
        \node[black] (mu2) at (4,-1) {};
        \node[] at (4,-1.4) {$m(u_2)$};
        \node[black] (mu1) at (4,1) {};
        \node[] at (4,1.4) {$m(u_1)$};
        
        \node[invisible] (bjt) at (7,2) {};
        \node[invisible] (bjb) at (7,-2) {};

        \draw[dotted] (u1)--(mu2); 
        \draw[dotted] (u2)--(mu1);
        \draw[dotted] (mu2)--(mu1);
        \draw[dotted] (mu1)--(bjt); 
        \draw[dotted] (mu1)--(bjb); 
        \draw[dotted] (mu2)--(bjt); 
        \draw[dotted] (mu2)--(bjb); 
    
        \draw[line width=2.5] (s1) to (u1);
        \draw[line width=2.5] (s2) to (u2);
        \draw[line width=2.5] (u2) to (mu2);
        \draw[line width=2.5] (u1) to (mu1);

        \draw[decorate, line width=2] (s1)--(s2);
\end{tikzpicture}
\hskip 11mm
\begin{tikzpicture}[scale=0.7]

\filldraw[color=black!100, fill=black!0, very thick] (-1,0) ellipse (1 and 3); 
\node[] at (-1.4,3.3) {$S$};
\node[] at (-1.4,0) {$P$};

\filldraw[color=black!100, fill=black!0, very thick] (1.5,0) ellipse (1 and 3); 
\node[] at (1.7,3.3) {$A'_c$};

\filldraw[color=black!100, fill=black!0, very thick] (1.5,0) ellipse (0.8 and 2); 
\node[] at (1.2,2.3) {$A_c$};

\filldraw[color=black!100, fill=black!0, very thick] (4,0) ellipse (1 and 2.4); 
\node[] at (4.1,2.8) {$X $};

\filldraw[color=black!100, fill=black!0, line width=2.5] (7,0) ellipse (1 and 2); 
\node[] at (7.4,2.2) {$B_j$};

        \node[white] (s1) at (-1,1) {$s_1$};
        \node[white] (s2) at (-1,-1) {$s_2$};
        \node[black] (u1) at (1.5,1) {};
        \node[] at (1.5,1.4) {$u_1$};
        \node[black] (u2) at (1.5,-1) {};
        \node[] at (1.5,-1.4) {$u_2$};
        \node[black] (mu2) at (4,-1) {};
        \node[] at (4,-1.4) {$m(u_2)$};
        \node[black] (mu1) at (4,1) {};
        \node[] at (4,1.4) {$m(u_1)$};
        \node[black] (mu3) at (4,-.3) {};
        \node[] at (4,0.2) {$m(u_3)$};
         
        \node[invisible] (bjt) at (7,2) {};
        \node[invisible] (bjb) at (7,-2) {};

        \draw[dotted] (u1)--(mu2); 
        \draw[dotted] (u2)--(mu1);
        \draw[dotted] (u1)--(mu3);
        \draw[dotted] (u2)--(mu3);
        \draw[dotted] (mu1)--(bjt); 
        \draw[dotted] (mu1)--(bjb); 
        \draw[dotted] (mu2)--(bjt); 
        \draw[dotted] (mu2)--(bjb); 
        \draw[dotted] (mu3)--(bjb); 
        \draw[dotted] (mu3)--(bjt); 

        \draw[line width=2.5] (s1) to (u1);
        \draw[line width=2.5] (s2) to (u2);
        \draw[line width=2.5] (u2) to (mu2);
        \draw[black] (u1)--(mu1);
        \draw[line width=2.5] (mu3) to (mu2);

        \draw[decorate, line width=2] (s1)--(s2);
\end{tikzpicture}
\caption{The structure described in Claim \ref{claim:aissmall}. Here,   $X = N(A'_c) \cap (M\setminus W).$ Dashed lines indicate non-adjacency. The path $u_1Pu_2$ is the shortest $(u_1,u_2)$-path with internal vertices in $S$, and hence if $s_1 \neq s_2$, we have that $u_2s_1$ and $u_1s_2$ are not edges in $G$. To keep the image uncluttered, the dashed lines between $S$ and each of $X$ and $B_j$ as well as those between $A'_c$ and $B_j$ have been omitted. On the left, $m(u_1),u_1,P,u_2,m(u_2)$ together with $B_j$ contains an induced copy of $(P_5 + rK_1)$. On the right, $u_1,P,u_2,m(u_2),m(u_3)$ with $B_j$ contains an induced copy of $(P_5 + rK_1)$.  As both cases contradict that $G \in \mathcal{G}_r$, we conclude that $|A_c| \leq 2k$.}
    \label{fig:acsmall}
\end{center}
\end{figure}
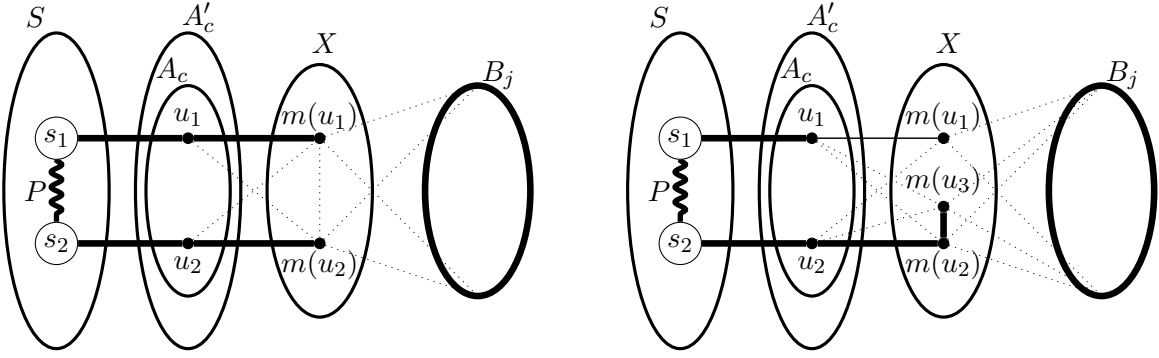

\begin{claim} \label{claim:aissmall}
    For each $c \in [k]$, we have that $|A_c| \leq 2k$. 
\end{claim}
\begin{proof}
Let $c \in [k]${, and recall the definitions of $M$ and $W$ given in Definition \ref{defs:QSoperators}}. Thus for each vertex $m \in M \setminus W$ there exists an index $c = ind(m)$ such that either $m \in B_c$, or $m$ is anticomplete to $B_c$ and $|B_c| = r$. Note that by the minimality of $A_c$ in $A'_c$, each vertex $v \in A_c$ has a unique corresponding neighbour $m(v)$ in $M\setminus W$ with the property that $N(m(v)) \cap A_c = \{v\}$. Furthermore, since $\bigcup_{c \in [k]} A_c \subseteq V(C)$ is anticomplete to $\bigcup_{c \in [k]} B_c \subseteq V(OPT \setminus C)$, it follows that $m(v) \not\in \bigcup_{c \in [k]} B_c$.

Suppose for a contradiction that $|A_c| \geq 2k+1$.  {By the pigeonhole principle, there exists an index $j \in [k]$ and vertices $u_1,u_2$, and $u_3$ in $A_c$ such that $\{m(u_1), m(u_2),m(u_3)\} \subseteq M \setminus W$ and $ind(m(u_1)) = ind(m(u_2)) = ind(m(u_3)) =  j$ and $|B_j|=r$.} It follows that  $B_j$ is anticomplete to $\{u_1,u_2, u_3\} \cup S$ since $\{u_1, u_2, u_3\} \cup S  \subseteq V(C)$, but each vertex in $B_j$ lies in a component of $OPT$ distinct from $C$. We note further that $S$ is anticomplete to $\{m(u_1),m(u_2), m(u_3)\}$ since $\{m(u_1),m(u_2), m(u_3)\} \subseteq M$ and $M = V(G) \setminus N\langle S \rangle$. 
    
First suppose that $\{m(u_1),m(u_2), m(u_3)\}$ does not induce a triangle in $G$; without loss of generality, we assume $m(u_1)$ and $m(u_2)$ are non-adjacent (see Figure \ref{fig:acsmall}, left). 
    Let $s_1$ and $s_2$ be neighbours of $u_1$ and $u_2$, respectively, in $S$ chosen to minimise $dist_{S}(s_1,s_2)$ (note that it is possible that $s_1 = s_2$). Let $P$ be a shortest $(s_1,s_2)$-path in $S$. Note that $P$ exists, since $S$ is connected. Then $m(u_1),u_1,P,u_2,m(u_2)$ together with $B_j$ contains an induced copy of $(P_5 + rK_1)$, contradicting that $G \in \mathcal{G}_r$. 
    
    We may assume instead that $\{m(u_1), m(u_2), m(u_3)\}$ induces a triangle in $G$; therefore $m(u_3)m(u_2) \in E(G)$ (see Figure \ref{fig:acsmall}, right). As above, let $s_1$ and $s_2$ be neighbours of $u_1$ and $u_2$, respectively, in $S$ chosen to minimise $dist_{S}(s_1,s_2)$ (again, it is possible that $s_1 = s_2$). Let $P$ be a shortest $(s_1,s_2)$-path in $S$. Note that $P$ exists, since $S$ is connected. In this case,  $u_1,P,u_2,m(u_2),m(u_3)$ together with $B_j$ contains an induced copy of $(P_5 + rK_1)$, again contradicting that $G \in \mathcal{G}_r$. {Hence we conclude instead that $|A_c| \le 2k$.}
\end{proof} 

{The following result will be used in the proof of Claim \ref{claim:Y'deffocanvas}.}

\begin{claim}\label{claim:y'zr}
     Let $i, \ell \in [t]$ with $i < \ell$ and $c \in [k]$. If $Y_{i,c,\ell} \not = \emptyset$, then $|Z_{\ell,c}| = r$ and $N(Y_{i,c,\ell}) \cap Z_{\ell,c} = \emptyset.$
\end{claim}
\begin{proof}
    Since $Y_{i,c,\ell}$ is non-empty, it follows from the definition of $Y_{i,c,\ell}$ that $|Z_{\ell,c}| = r$. Vertices $v$ in $Y_{i,c,\ell} \cup Z_{\ell,c}$ all satisfy $f(v) = c$ by definition, and thus each $y \in Z_{\ell,c}$ has no neighbours in $Y_{i,c,\ell}$, as desired. 
\end{proof}

\begin{claim}\label{claim:Y'deffocanvas}
    For each set $Y_{i,c,\ell}$, we have that $|Y_{i,c,\ell}| \leq 2$. 
\end{claim}

\begin{proof}    Suppose not. Let $Y_{i,c,\ell}$ be a counterexample, and let $y_1, y_2,y_3$ be distinct vertices in $Y_{i,c,\ell}$ (see Figure \ref{fig:caseslemsolwithallinc}). By Claim \ref{claim:y'zr}, $y_1$, $y_2,$ and $y_3$ have no neighbours in $Z_{\ell,c}$, and $|Z_{\ell,c}| = r$. By the minimality of $Y_{i,c,\ell}$, each vertex $y \in Y_{i,c,\ell}$ has a corresponding neighbour $n_\ell(y) \in (X_\ell \setminus N(Z_{\ell,c}))$ such that the only neighbour of $n_\ell(y)$ in $Y_{i,c,\ell}$ is $y$. Moreover, recall that $Y_{i,c,\ell}$ is an independent set, since all vertices $v$ in $Y_{i,c,\ell}$ satisfy $f(v) = c$.
    Note that since $i < \ell$, it follows that $v_i$ is anticomplete to $X_\ell$.
    The structure of the graph is depicted in Figure \ref{fig:caseslemsolwithallinc} for the case $i = 1$ and $\ell = 2$.
    
    If $n_\ell(y_1)$ and $n_\ell(y_2)$ are non-adjacent, then $n_\ell(y_1),y_1,v_i,y_2,n_\ell(y_2)$ together with $Z_{\ell,c}$ form a copy of $(P_5 + rK_1)$, a contradiction. Thus we may assume by symmetry that $\{n_\ell(y_1), n_\ell(y_2), n_\ell(y_3)\}$ form a clique. But then $n_\ell(y_3),n_\ell(y_1),y_1,v_i,y_2$ together with $Z_{\ell,c}$ form a copy of $(P_5 + rK_1)$, again a contradiction. 
\end{proof}

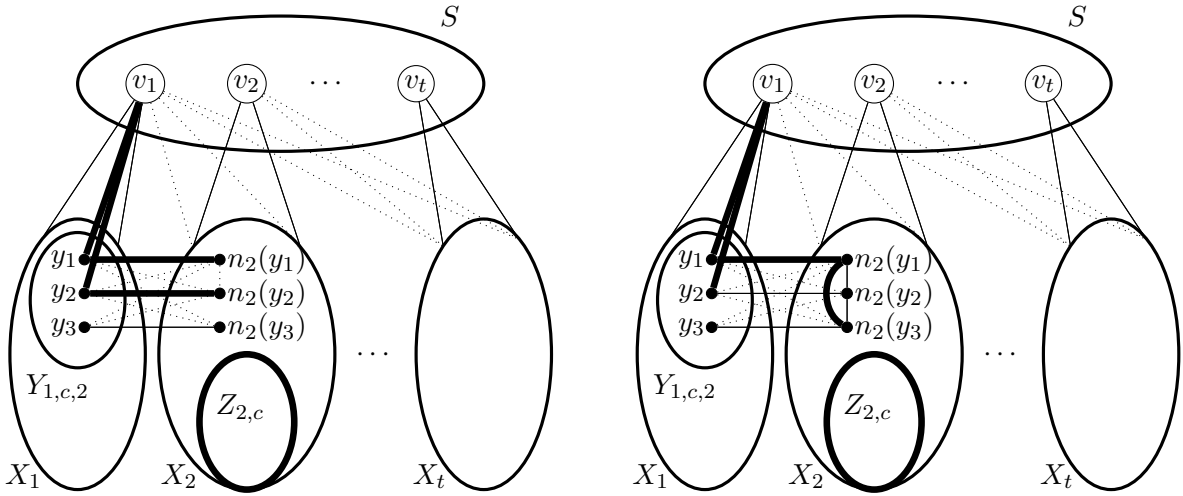
\begin{figure}[ht]
\tikzset{black/.style={shape=circle,draw=black,fill=black,inner sep=1pt, minimum size=4pt}}
\tikzset{white/.style={shape=circle,draw=black,fill=white,inner sep=1pt, minimum size=9pt}}
\tikzset{invisible/.style={shape=circle,draw=black,fill=black,inner sep=0pt, minimum size=0.1pt}}
\tikzset{decoration={snake,amplitude=.4mm,segment length=1mm,
                       post length=0mm,pre length=0mm}}
\begin{center}
\begin{tikzpicture}[scale=0.9]

\filldraw[color=black!100, fill=black!0, very thick] (0,2) ellipse (3 and 1); 
\node[] at (2.5,3) {$S$};

\filldraw[color=black!100, fill=black!0, very thick] (-3,-2) ellipse (1 and 2);
\node[] at (-3.8,-3.8) {$X_1$};

\filldraw[color=black!100, fill=black!0, very thick] (-3,-1.2) ellipse (0.7 and 1);
\node[] at (-3.3,-2.5) {$Y_{1,c,2}$};

\filldraw[color=black!100, fill=black!0, very thick] (-0.5,-2) ellipse (1.3 and 2); 
\node[] at (-1.5,-3.8) {$X_2$};
\filldraw[color=black!100, fill=black!0, line width=2.5] (-0.5,-3) ellipse (0.7 and 1);
\node[] at (-0.6,-2.8) {$Z_{2,c}$};

\filldraw[color=black!100, fill=black!0, very thick] (3,-2) ellipse (1 and 2); 
\node[] at (2.2,-3.8) {$X_t$};

        \node[white] (v1)  at (-2,2) {$v_1$};
        \node[white] (v2) at (-0.5,2) {$v_2$};
        \node[] at (0.7,2) {$\cdots$};
        \node[white] (vt) at (2,2) {$v_t$};

        \node[black] (y1) at (-2.9,-0.6) {};
        \node[] at (-3.2,-0.6) {$y_1$};
        \node[black] (y2) at (-2.9,-1.1) {};
        \node[] at (-3.2,-1.1) {$y_2$};
        \node[black] (y3) at (-2.9,-1.6) {};
        \node[] at (-3.2,-1.6) {$y_3$};
        \node[black] (n2y1) at (-0.9,-0.6) {};
        \node[] at (-0.2,-0.6) {$n_2(y_1)$};
        \node[black] (n2y2) at (-0.9,-1.1) {};
        \node[] at (-0.2,-1.1) {$n_2(y_2)$};
        \node[black] (n2y3) at (-0.9,-1.6) {};
        \node[] at (-0.2,-1.6) {$n_2(y_3)$};

        \node[invisible] (x11) at (-3.5,-0.25) {};
        \node[invisible] (x12) at (-2.4,-0.38) {};
        \node[invisible] (x21) at (-1.3,-0.43) {};
        \node[invisible] (x22) at (0.28,-0.38) {}; 
        \node[] at (1.4,-2) {$\cdots$};
        \node[invisible] (xt1) at (2.4,-0.4) {};
        \node[invisible] (xt2) at (3.52,-0.3) {};

        \draw[black] (v1)--(x11); 
        \draw[black] (v1)--(x12); 
        \draw[dotted] (v1)--(x21); 
        \draw[dotted] (v1)--(x22);
        \draw[dotted] (v1)--(xt1); 
        \draw[dotted] (v1)--(xt2);
        \draw[black] (v2)--(x21); 
        \draw[black] (v2)--(x22); 
        \draw[dotted] (v2)--(xt1); 
        \draw[dotted] (v2)--(xt2);
        \draw[black] (vt)--(xt1); 
        \draw[black] (vt)--(xt2);
        
        \draw[black] (y1)--(n2y1); 
        \draw[dotted] (y1)--(n2y2); 
        \draw[dotted] (y1)--(n2y3); 
        \draw[dotted] (y2)--(n2y1); 
        \draw[black] (y2)--(n2y2); 
        \draw[dotted] (y2)--(n2y3); 
        \draw[dotted] (y3)--(n2y1); 
        \draw[dotted] (y3)--(n2y2); 
        \draw[black] (y3)--(n2y3); 

        \draw[dotted] (n2y1)--(n2y2);
        \draw[line width=2.5] (y1) to (v1);
        \draw[line width=2.5] (v1) to (y2);
        \draw[line width=2.5] (y1) to (n2y1);
        \draw[line width=2.5] (y2) to (n2y2);

\end{tikzpicture}
\hskip 8mm
\begin{tikzpicture}[scale=0.9]

\filldraw[color=black!100, fill=black!0, very thick] (0,2) ellipse (3 and 1); 
\node[] at (2.5,3) {$S$};

\filldraw[color=black!100, fill=black!0, very thick] (-3,-2) ellipse (1 and 2);
\node[] at (-3.8,-3.8) {$X_1$};

\filldraw[color=black!100, fill=black!0, very thick] (-3,-1.2) ellipse (0.7 and 1);
\node[] at (-3.3,-2.5) {$Y_{1,c,2}$};

\filldraw[color=black!100, fill=black!0, very thick] (-0.5,-2) ellipse (1.3 and 2); 
\node[] at (-1.5,-3.8) {$X_2$};
\filldraw[color=black!100, fill=black!0, line width=2.5] (-0.5,-3) ellipse (0.7 and 1);
\node[] at (-0.6,-2.8) {$Z_{2,c}$};

\filldraw[color=black!100, fill=black!0, very thick] (3,-2) ellipse (1 and 2); 
\node[] at (2.2,-3.8) {$X_t$};

        \node[white] (v1)  at (-2,2) {$v_1$};
        \node[white] (v2) at (-0.5,2) {$v_2$};
        \node[] at (0.7,2) {$\cdots$};
        \node[white] (vt) at (2,2) {$v_t$};

        \node[black] (y1) at (-2.9,-0.6) {};
        \node[] at (-3.2,-0.6) {$y_1$};
        \node[black] (y2) at (-2.9,-1.1) {};
        \node[] at (-3.2,-1.1) {$y_2$};
        \node[black] (y3) at (-2.9,-1.6) {};
        \node[] at (-3.2,-1.6) {$y_3$};
        \node[black] (n2y1) at (-0.9,-0.6) {};
        \node[] at (-0.2,-0.6) {$n_2(y_1)$};
        \node[black] (n2y2) at (-0.9,-1.1) {};
        \node[] at (-0.2,-1.1) {$n_2(y_2)$};
        \node[black] (n2y3) at (-0.9,-1.6) {};
        \node[] at (-0.2,-1.6) {$n_2(y_3)$};

        \node[invisible] (x11) at (-3.5,-0.25) {};
        \node[invisible] (x12) at (-2.4,-0.38) {};
        \node[invisible] (x21) at (-1.3,-0.43) {};
        \node[invisible] (x22) at (0.28,-0.38) {}; 
        \node[] at (1.4,-2) {$\cdots$};
        \node[invisible] (xt1) at (2.4,-0.4) {};
        \node[invisible] (xt2) at (3.52,-0.3) {};

        \draw[black] (v1)--(x11); 
        \draw[black] (v1)--(x12); 
        \draw[dotted] (v1)--(x21); 
        \draw[dotted] (v1)--(x22);
        \draw[dotted] (v1)--(xt1); 
        \draw[dotted] (v1)--(xt2);
        \draw[black] (v2)--(x21); 
        \draw[black] (v2)--(x22); 
        \draw[dotted] (v2)--(xt1); 
        \draw[dotted] (v2)--(xt2);
        \draw[black] (vt)--(xt1); 
        \draw[black] (vt)--(xt2);
        
        \draw[black] (y1)--(n2y1); 
        \draw[dotted] (y1)--(n2y2); 
        \draw[dotted] (y1)--(n2y3); 
        \draw[dotted] (y2)--(n2y1); 
        \draw[black] (y2)--(n2y2); 
        \draw[dotted] (y2)--(n2y3); 
        \draw[dotted] (y3)--(n2y1); 
        \draw[dotted] (y3)--(n2y2); 
        \draw[black] (y3)--(n2y3); 

        \draw[black] (n2y1)--(n2y2);
        \draw[black] (n2y2)--(n2y3);
        \draw[] (n2y1) to[in=150, out=210] (n2y3);
        \draw[line width=2.5] (y1) to (v1);
        \draw[line width=2.5] (v1) to (y2);
        \draw[line width=2.5] (y1) to (n2y1);
        \draw[line width=2.5] (n2y1) to[in=150, out=210] (n2y3);

\end{tikzpicture}
\caption{The structure described in Lemma \ref{lem:solwithallinC}, Claim \ref{claim:Y'deffocanvas} with (for illustrative purposes) $\ell = 2$ and $i = 1$. Dashed lines indicate non-adjacency. To keep the image uncluttered, the dashed lines between $Z_{2,c}$ and each of $\{y_1,y_2,y_3\}$ and $\{n_2(y_1), n_2(y_2), n_2(y_3)\}$ have been omitted. On the left: the first case covered in Claim \ref{claim:Y'deffocanvas}, where $n_2(y_1)$ and $n_2(y_2)$ are non-adjacent: here $n_2(y_1),y_1,v_1,y_2,n_2(y_2)$ and $Z_{2,c}$ form an induced copy of $(P_5+rK_1)$ shown. On the right, the second case covered in Claim \ref{claim:Y'deffocanvas}: here, we assume $\{n_2(y_1),n_2(y_2),n_2(y_3)\}$ form a clique, and so $n_2(y_3),n_2(y_1),y_1,v_1,y_2$ and $Z_{2,c}$ form an induced copy of $(P_5 + rK_1)$. Since both cases lead to a contradiction, we conclude that $|Y_{1,c,2}| \leq 2$.}
    \label{fig:caseslemsolwithallinc}
\end{center}
\end{figure}

Thus $Q$ is indeed a canvas. 

\begin{claim} \label{claim:c_associated}
    $C$ is associated with $Q$. 
\end{claim}
\begin{proof}
    We know from the definition of $Q$ that $S$ dominates $C$ and that $f$ colours $G[S]$ correctly, so conditions (i) - (iii) of Definition~\ref{def:associated} are satisfied. Let $i \in [t]$ and $v \in V(C) \cap X_i$; let $c = f(v)$. {Recall that for all $c' \in [k],$ the vertices of $B_{c'}$ are coloured by $f$ and are hence vertices in the graph $OPT \setminus C$. Since $v \in V(C)$, it is anticomplete to $\bigcup_{c' \in [k]} B_{c'}$ {(condition (iv)(a)).} Given $c' \in [k]$, if $v \in A_{c'}, Z_{i,c'},$ or $Y_{i,c',\ell}$ for some $\ell \in [t]$ with $\ell > i$, then by the definition of $Q$ we have that $c' = c$ {(condition (iv)(b)).} Otherwise, assume $v$ is not in $A_c, Z_{i,c}, $ or $Y_{i,c, \ell}$ for any $\ell \in [t]$ with $\ell > i$. We check the five sets of Definition \ref{def:associated} in order.
    
    First, since $f$ is a proper colouring, $f(v)$ is not an element of $\{f(v_j) : j \in [t] \text{ and } vv_j \in E(G) \}$.  Second, from the definition of $A_c$, we have $$(N(v) \cap M) \setminus W \subseteq N(A_c') \cap (M \setminus W) = N(A_c) \cap (M \setminus W) \subseteq N(A_c).$$ Hence $f(v)$ is not an element of $\{c' \in [k]: N(v) \cap M \setminus W \not \subseteq N(A_{c'})\}$. Third, since $f(v) = c$, it follows that $v$ has no neighbours in $A_c \cup B_c \cup \bigcup_{\ell \in [t]}Z_{\ell, c} \cup \bigcup_{\substack{\ell, j \in [t], \ell < j}}Y_{\ell, c, j}$. Fourth, since $v$ is a vertex of $X_i$ coloured $c$ but $v \not \in Z_{i,c}$, it follows that $|Z_{i,c}| = r$ by definition of $Z_{i,c}$. Fifth, by definition of $Y'_{i,c}, $ we have $v \in Y'_{i,c}$. Let $w \in N(v) \cap X_j$ for some $j \in [t]$ with $j > i$. By definition of $Y_{i,c,j}$, it follows that if $|Z_{j,c}| = r$, then $w \in N(Y_{i,c,j})$ or $w \in N(Z_{j,c})$. Hence $f(v)$ is not an element of $\{c': \exists j > i \text{ such that } |Z_{j,c'}| = r \text{ and } (N(v) \cap X_j)\not\subseteq (N(Y_{i, c', j})\cap X_j) \setminus N(Z_{j, c'})\}$. Therefore, $f(v)$ {also} adheres to the restrictions of {condition (iv)(c) of}} Definition \ref{def:associated}, and as such $C$ is associated with $Q$.  
    \end{proof}

Let $C'$ be the output of Algorithm \ref{refining_algorithm} with inputs $G, Q$ and $L$. By Lemma \ref{lem:solqvalid}, $C'$ is indeed an $L$-colourable induced subgraph of $G$. {Furthermore, $C'$ is excellent: since $C'$ is produced from Algorithm \ref{refining_algorithm}, we have that $C' \in \mathcal{C}$, and the sets $A_c$ and $Y_{i,c,l}$ in $Q$ are chosen by definition to afford $C'$ the additional properties needed to be excellent.}

We now show that $(OPT\setminus C) \cup C'$ is $L$-colourable and $\omega((OPT \setminus C) \cup C') \ge \omega(OPT)$. To that end, we prove the following.

\begin{claim}\label{claim:c'isacompt}
    $C'$ and $OPT \setminus C$ are anticomplete to one another. 
\end{claim}
\begin{proof}
    Suppose for a contradiction that $v \in V(OPT) \setminus V(C)$ is adjacent to some $v' \in V(C')$. Since $C$ is a component of $OPT$, it follows that $v' \not \in V(C)$. Since $S \subseteq V(C)$, it follows that $v' \in V(C') \setminus S$. From line \ref{line:15} of Algorithm \ref{refining_algorithm}, it follows that $v \not\in \bigcup_{c \in [k]} B_c$.
    
    First we show that $W \cap OPT = \emptyset$. Fix $c \in [k]$ and $w \in W \cap OPT$ with $f(w) = c$ and $w \not \in B_c$. If $|B_c| = r$, then since all vertices in $W$ have a neighbour in $B_c$, $w$ cannot be coloured $c$. If $|B_c| < r$, then all vertices in $OPT$ of colour $c$ are in $B_c$, so $w \in B_c$, a contradiction. Hence $w$ cannot be coloured $c$ for any $c \in [k]$, so $W \cap OPT = \emptyset$. This implies that $v \in M \setminus W$. 
    
     Suppose that $v'$ has colour $c' \in [k]$ in a $k$-colouring of $C'$. It follows from line \ref{line:8} of Algorithm \ref{refining_algorithm} that $N(v') \cap (M \setminus W) \subseteq N(A_{c'})$.  Since $v \in N(v') \cap M$, this implies $v \in W \cup N(A_{c'})$. From the previous paragraph, it follows that $v \in N(A_{c'}) \cap M$. Since $A_{c'} \subseteq C$, it follows that $N(A_{c'}) \cap M \cap OPT = \emptyset$, a contradiction. 
\end{proof}

Claim \ref{claim:c'isacompt} implies that our $L$-colouring of $OPT \setminus C$ and our $L$-colouring of $C'$ can be combined to produce an $L$-colouring of $(OPT \setminus C) \cup C'$. 

The subgraph $C$ is associated with $Q$ by Claim \ref{claim:c_associated}, so Lemma \ref{lem:c'asgoodasc} implies that $\omega(C') \ge \omega(C)$. It follows that $(OPT \setminus C) \cup C'$ has weight at least as high as $OPT$. Thus $(OPT \setminus C) \cup C'$ is an optimum solution to the WML$k$CIS problem using strictly more excellent components, a contradiction.  Therefore, a solution to the WML$k$CIS  problem can be produced entirely from components of $\mathcal{C}$. 
\end{proof}

Having established all the necessary tools, we prove Theorem \ref{thm:mainthmp5} below.

\begin{proof}[Proof of Theorem \ref{thm:mainthmp5}.] 
    Fix $r \in \mathbb{N}$. We proceed by induction on $k$, the number of colours. By Lemma \ref{lem:stable}, a solution to the WM$(1)$CIS problem (which is the {\sc Maximum-Weight Independent Set} problem) for input graphs in $\mathcal{G}_r$ can be found in $\textnormal{poly}(v(G))$-time. 
    
    Now let $k \geq 2$, and suppose that WM$(k-1)$CIS is solvable in $\textnormal{poly}(v(G))$-time.  Let $G \in \mathcal{G}_r$. Let $L:V(G) \rightarrow 2^{[k]}$ be a list assignment for $G$. Note that we may assume $G$ is connected; if $G$ is disconnected, we apply this argument to each component of $G$ and take the union of the solutions for each component.  Let $\mathfrak{Q}_L$ be the set of all possible canvases with vertices in $G$ and list assignment $L$. By Lemma \ref{lem:Csmall}, $|\mathfrak{Q}_L| \in \textnormal{poly}(v(G))$. Let $\mathcal{C}$ be the set of outputs of Algorithm \ref{refining_algorithm} with inputs $G$ and $Q \in \mathfrak{Q}_L$. By Lemma \ref{lemma:alg_terminates}, since WM$(k-1)$CIS is solvable in $\textnormal{poly}(v(G))$-time, it follows that $\mathcal{C}$ can be found in $\textnormal{poly}(v(G))$-time. By Lemma \ref{lem:solwithallinC}, there exists a solution $OPT$ to the WML$k$CIS problem for $G$ where all components of $OPT$ are in $\mathcal{C}$. By Lemma \ref{lem:givenCcanfindOPT}, such a solution can be found in $\textnormal{poly}(v(G))$ time, as desired.
\end{proof}

\bibliographystyle{siam}
\bibliography{bibliog}

\end{document}